\begin{document}
\title{A Lagrange decomposition based Branch and Bound algorithm for the Optimal Mapping of Cloud Virtual Machines }
\author{Guanglei Wang \and Walid Ben-Ameur \and Adam Ouorou }
\date{07 June 2018}
\maketitle
\begin{abstract}
One of the challenges of cloud computing is to optimally and efficiently assign
    virtual machines to physical machines. The aim of telecommunication
    operators is to minimize the mapping cost while respecting constraints
    regarding location, assignment and capacity. In this paper we first propose an
    exact formulation leading to a 0-1 bilinear constrained problem.  Then we
    introduce a variety of linear cuts by exploiting the problem structure and
    present a Lagrange decomposition based B\&B algorithm to obtain optimal solutions
    efficiently.~Numerically, we show that our valid inequalities close over
    $80\%$ of the optimality gap incurred by the well-known McCormick
    relaxation, and demonstrate the computational advantage of the proposed B\&B algorithm with extensive numerical experiments. 
 
\end{abstract}

\section{Introduction}\label{intro}
Virtualization technology enables the emergence of cloud computing as a
flexible and on-demand service.
In a virtualization-based network 
the placement of Virtual Machines (VM) exerts significant influence on the computation and communication performance of cloud services~\cite{Papa13}. 
A considerable amount of investigations have been devoted to the 
optimal assignment of VMs to servers accounting for 
certain objectives and constraints. 

Google in 2012 proposed a challenge organized by the French Operational Research
and Decision Aid Society (ROADEF) and the European Operational Research society
(EURO), where a set of VMs needs to be assigned to a set of servers
to minimize the assignment cost while balancing the usage of servers
under several resource constraints.~As reported in~\cite{Murat2016}, the
proposal takes into account capacity constraints of servers regarding CPU,
memory, storage.~However, it~\emph{does not} include
bandwidth constraints respecting certain throughput requirements among VMs. Exact
formulations of this problem are Mixed Integer Linear Programs (MILP). To deal with
large scale problems, different heuristics are proposed. On the other hand, the authors in~\cite{meng2010:improving} introduce a traffic-aware virtual machine
placement model taking into account bandwidth constraints, which leads to a Quadratic Assignment Problem (QAP) for the solution of which  a two-tier heuristic algorithm is proposed. 

This research topic is also discussed in the context of \emph{Virtual Network
Embedding} (VNE), where virtual networks are required to be mapped to a physical network~\cite{fischer2013virtual} while respecting different constraints and objectives. 
For instance, Houidi et al.~\cite{Walid11} propose a MILP
model to solve the VM assignment problem and later this work is extended in~\cite{houidi2015exact} to jointly take into account 
energy-saving, load balancing and survivability objectives. 
The authors in~\cite{Papa13} present a MILP model and consider a
two-phase heuristic: a node mapping phase and link mapping phase. In the node
mapping phase, random rounding techniques~\cite{Rag87} are used to correlate
flow variables and binary variables. In the link mapping phase, decisions on the
mapping of virtual links are made by solving a Multi-Commodity Network Flow
(MCNF) problem. Later the authors in~\cite{coniglio2016} propose a chance constrained MILP formulation 
to handle the uncertain demand of different virtual networks and they propose a couple of heuristics based on MILPs
for its solution.~For more details about the VNE technology and related investigations, 
we redirect interested readers to \cite{fischer2013virtual,mijumbi2016} for comprehensive surveys. 

A recent thesis~\cite{mechtri2014:VM} studies the virtual network infrastructure
provision in a distributed cloud environment, where a 0-1 bilinear constrained
model taking into account bandwidth constraints is proposed. Heuristic methods
exploiting graph partition and bipartite graph matching techniques are proposed
for the solution procedure.

More recently, Fukunaga et al.~\cite{fukunaga2017:VMs} consider the assignment of VMs under capacity 
constraints aiming at minimizing certain connection cost. A centralized model and a distributed model are proposed for modeling 
the connection cost. In the former case, a root node is introduced and the connection cost is defined as the length of network links connecting all host servers and the root node. 
A couple of approximation algorithms are presented for cases of uniform and nonuniform requests respectively.
 However all VMs are assumed to be the same and bandwidth constraints are not considered. 

In spite of these efforts, few focus on mathematical programming methods in the presence of 
bandwidth constraints. In contrast to heuristic approaches (e.g., Genetic Programming, Tabu Search) whose performance is usually evaluated by simulations, a mathematical programming approach offers performance guarantees with proved lower and upper bounds. 
Furthermore, it benefits from off-the-shelf solvers that are being continually improved. Thus mathematical programming based methods deserve in-depth investigations.

In our previous paper~\cite{Wang2016:mapping}, we formulate the bandwidth constrained mapping problem as a 0-1 bilinear constrained problem and our numerical results demonstrate that the problem is computationally challenging even for a small number of VMs. This article aims at improving the computational performance by orders of magnitude. It extends the previous work by introducing some effective valid inequalities and proposes a Lagrange decomposition based Branch and Bound (B\&B) algorithm to accelerate the solution procedure. Contributions are summarized as follows.
\begin{enumerate}
\item We propose a compact model with a number of novel valid inequalities for the mapping problem.
\item We propose a Lagrange decomposition procedure for generating strong valid inequalities thus improving the continuous relaxation lower bound. 
\item We develop a B\&B algorithm to solve the mapping problem to global optimality. Various valid inequalities are used to strengthen the relaxation at each node dynamically. 
\item We conduct extensive numerical experiments showing the effectiveness of the proposed algorithm by orders of computational improvement.
\end{enumerate}
The rest of this paper is organized as follows.~In Section~\ref{sec:background}, we state the background of the mapping problem.~In Section~\ref{sec:generalmodel}, the mathematical formulation of the mapping problem is presented and a couple of reformulations involving 
strong valid inequalities are proposed. Section~\ref{sec:BB} is dedicated to a B\&B algorithm where lower and upper bounding procedures are elaborated in detail. In Section~\ref{sec:numerical}, we evaluate the effectiveness of valid inequalities and the proposed B\&B algorithm.~Finally,~concluding 
remarks follow in Section~\ref{sec:conclusion}.

\section{Problem background}\label{sec:background}
VMs play an important role in a cloud computing environment. A customer's
request consists of a number of VMs, which are allocated on servers to execute
a specific program.{
Without special restrictions, a server can usually run multiple VMs simultaneously.

In order to improve the utility of data center resources, VMs should be
dynamically started or stopped and sometimes live migration should be conducted,
i.e.,~move a VM from one server to another. Thus virtual communications should also be
mapped to the physical network.

The focus of this paper is on the assignment of virtual resources to a given physical network. 
The solution to this problem is how to map the cloud resources and
which servers and links should be used subject to certain
hard constraints, e.g.~resource capacity constraints, traffic routing constraints.
 \begin{figure}[!htpb]
 \centering
 \includegraphics[scale=0.35]{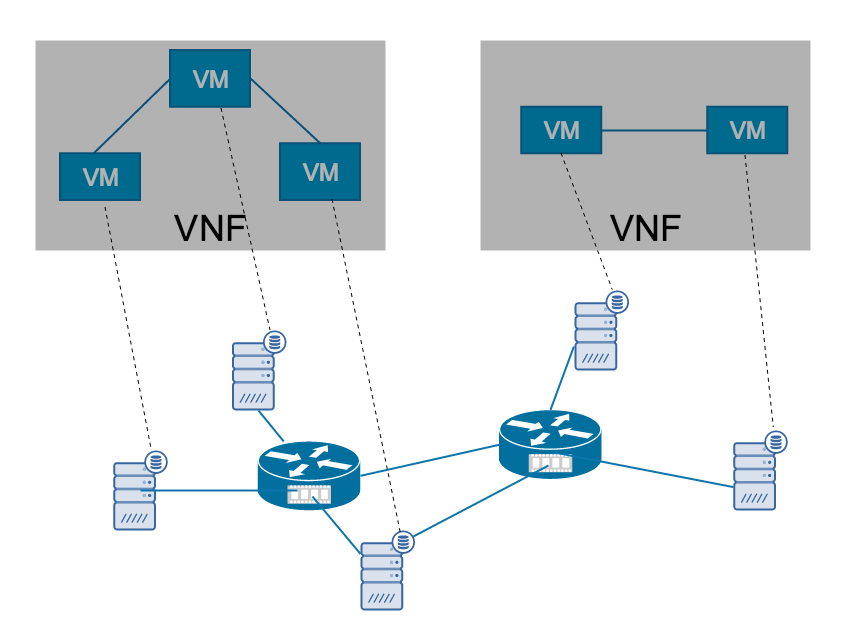}
 \caption{An illustration of the mapping procedure}
 \label{fig:mapping}
 \end{figure}

Figure~\ref{fig:mapping} illustrates the mapping problem involving two virtual
requests. One has three VMs and the other has two. In addition, VMs within the
same request communicate with each other. Dotted lines show 
a feasible mapping solution that VMs of each request are mapped to different severs and the communication
throughput between each pair of VMs is routed between the corresponding servers
that VMs are mapped to. 

Furthermore, one may need to be aware that the communication throughput between 
two servers (which host VMs) should route on a single path, as
multi-path routing may cause discrepancies among the arrivals of data at the destination.~So we assume that for each origin-destination (O-D) pair the corresponding traffic is routed on a shortest path. 
Since VMs within a request often communicate, we also assume that the request graph induced by VMs and virtual links is connected. 



\section{Formulations}\label{sec:generalmodel}
Recall that a virtual request consists of a set of virtual machines and their mutual virtual
communications. Therefore, we may represent each virtual request as
a directed graph. Our goal is to map such graphs to a physical network.
Henceforth, we will use the following notation to construct mathematical expressions. 
\par \begin{itemize}[label={}]
 \item [] Sets
\begin{itemize}[label={}]
 \item {\makebox[3cm][l]{$R$} set of virtual request} 
 \item {\makebox[3cm][l]{$H = (S,E)$} the connected graph of a physical network} 
  \item {\makebox[3cm][l]{$S$} set of servers in the physical network} 
 \item {\makebox[3cm][l]{$E$} set of undirected edges in the physical network}
 \item {\makebox[3cm][l]{$G^r = (V^r, L^r)$} a graph of virtual network for request $r \in R$} 
 \item {\makebox[3cm][l]{$V^r$} set of VMs of request $r$} 
 \item {\makebox[3cm][l]{$L^r$} set of undirected virtual links of request $r$} 
\end{itemize}
 \item [] Parameters
 \begin{itemize}[label={}]
 \item \makebox[3cm][l]{$c^{ri}$} required CPU of VM $i\in V^r$
 \item \makebox[3cm][l]{$m^{ri} $ } required memory of VM $i\in V^r$
 \item \makebox[3cm][l]{$C_k$} CPU cores of server $k$
 \item \makebox[3cm][l]{$M_k$ } memory capacity of server $k$
 \item \makebox[3cm][l]{$F_k$ } fixed cost of server $k\in S$
 \item \makebox[3cm][l]{$A_k$ } additional cost of server $k$ imposed from CPU loads
 \item \makebox[3cm][l]{$f^{rij}$ } required throughput associated with
 logical link $(i,j)\in L^r $
 \item \makebox[3cm][l]{$B_e$ } bandwidth of edge $e\in E$
 \item \makebox[3cm][l]{$W_e$ } fixed cost of edge $e\in E$ 
 \item \makebox[3cm][l]{$P_{kp}$ } shortest $k$-$p$ path, $(k,p)\in S\times
 S: k\neq p$
 \end{itemize}
 \item[] Variables
 \begin{itemize}[label={}]
 \item \makebox[3cm][l]{$x^{ri}_k\in \zeroone$} 1 if VM $i$ of request $r$ is mapped to server $k$
 \item \makebox[3cm][l]{$\theta_k \in \zeroone$ } 1 if server $k$ is used (switched on)
 \item \makebox[3cm][l]{$\phi_e \in \zeroone$ } 1 if edge $e$ is used (switched on)
 \end{itemize}
\end{itemize}
Notice that for a parameter or a variable, its subscripts (if it has) are
associated with physical resources while its superscripts (if it has) are
associated with virtual resources. For sake of convenience sometimes variables and parameters are 
presented in vector form where they are marked in bold. For instance $\vx^r$ represents
$\bigcurly{x^{ri}_k: i \in S, i \in V^r}$.

We construct the exact mathematical model of the mapping problem as follows
\begin{alignat}{3}
 \min~~& \sum\limits_{k\in S} 
 F_k\theta_k+\sum\limits_{k\in S}A_k\sum\limits_{r\in R}\sum\limits_{i\in
 V^r}c^{ri}x^{ri}_k+\sum\limits_{e\in E}W_e\phi_e&& \tag{$\bbP$} \label{bbP}\\
 \textrm{s.t.~~} &\sum_{k\in S} x^{ri}_k =1 &&\forall r\in R, i\in
 V^r\tag{AC}\label{Assign}\\
 &\sum\limits_{i\in V^r} x^{ri}_k \leq \theta_k && \forall
r\in R, \forall k\in S\tag{LC}\label{Location}\\
 &\sum\limits_{r\in R}\sum\limits_{i\in V^r} c^{ri}x^{ri}_k \leq
C_k\theta_{k}, && \forall k\in S\tag{KP}\label{KP1}\\
&\sum\limits_{r\in R}\sum\limits_{i\in V^r} m^{ri}x^{ri}_k \leq
M_k\theta_k && \forall k\in S\tag{KP'}\label{KP2}\\
&\sum\limits_{r\in R}\sum\limits_{\substack{k,p\in S:\\k \neq p,e \in
P_{kp}}}\sum\limits_{\substack{\{i,j\}\in L^r}} f^{rij}x^{ri}_kx^{rj}_p
\leq B_e\phi_e && \forall e\in E \tag{QC}\label{QC}\\
 &\theta_k,\phi_e, x^{ri}_k\in \{0,1\}  && \forall r\in R,i\in V^r, k\in S,
 e\in E. \tag{BC}\label{BC}
\end{alignat}
Henceforth we will use~\eqref{bbP} to represent this model and we interpret it as follows
\begin{itemize}
\item The objective is to minimize the total cost, which is
additively composed of three terms: the fixed cost incurred by switching
on servers, the additional cost coming from the CPU load, and the fixed cost from
the usage of links. We model the additional cost induced by CPU load as a linear
function to represent the fact that CPU is usually categorized as load
dependent resource while memory is load independent~\cite{Karve06}.
\item Constraints \eqref{Assign} mean that each virtual machine must be mapped to a single server.
 Constraints \eqref{Location} model the fact that virtual machines are
 usually mapped separately in a cloud environment due to some practical
 issues, e.g.,~security, reliability. 
\item Constraints (\ref{KP1}, \ref{KP2}) are~{knapsack constraints}. They ensure that for each server, the
 aggregated required CPU, memory resource cannot exceed its limits. 
 Constraints \eqref{QC} emphasis the fact that for each edge, the aggregated
 throughputs on the edge cannot exceed the bandwidth. 
\end{itemize} 
Before the solution procedure, we briefly analyze problem structure. First, the combination of integrality constraints and bilinear constraints (nonconvex)
makes the problem rather difficult.~Second, the bilinear products appearing in the formulation are dense.~Third, the problem aggregates features of the knapsack problem with multiple constraints
and the QAP problem. In fact the authors~\cite{Amaldi2016} show that the mapping problem of the form \eqref{bbP} is strongly NP-hard even
if $\abs{R}=1$ as there is a polynomial time reduction from the maximum stable set
problem. In what follows, we will focus on the solution procedure of ~\eqref{bbP}. 
\subsection{Reformulations}
Model $\eqref{bbP}$ is a 0-1 bilinear constrained problem. A fundamental
idea to deal with such problems is \textit{lifting} it to a higher dimensional
space~\cite{Burer2012,Burer2012MIQCP}.~Introducing new variables $y^{rij}_{kp}$
and enforcing $y^{rij}_{kp}=x^{ri}_kx^{rj}_p$ for each $(r,i,j,k,p):i\neq j,
k\neq p$, we lift the problem to a higher dimensional space and lead to a
MIP, at the expense of introducing non-convex
equations. 
Simple convex relaxations can be achieved by linearization techniques. 
In this paper, we adopt the well-known McCormick inequalities~\cite{McCormick1976}.
Specifically, for each $(r,i,j,k,p): i\neq j, k\neq p$, we approximate the equation $y^{rij}_{kp} = x^r_{ik}x^r_{jp}$ using the following four inequalities
 \begin{subequations}
 \begin{align}
 x^{ri}_k+x^{rj}_p - 1\leq y^{rij}_{kp}\label{linear1}\\
 y^{rij}_{kp}\leq x^{ri}_k\label{linear2}\\
 y^{rij}_{kp}\leq x^{rj}_p \label{linear3}\\
 y^{rij}_{kp}\geq 0.\label{linear4}
 \end{align}
\end{subequations}
And~\eqref{QC} is linearized as 
\begin{equation}\tag{QCL}\label{QCL}
\sum\limits_{r\in R}\sum\limits_{\substack{k,p\in S:\\k \neq p,e \in
P_{kp}}}\sum\limits_{\substack{\{i,j\}\in L^r}} f^{rij}y^{rij}_{kp}
\leq B_e\phi_e, \;  \forall e\in E 
\end{equation}
With this relaxation, we can convert the 0-1 bilinear model~\eqref{bbP} to a MIP
\begin{equation}\tag{$\bbP_{\mathrm{MC}}$}\label{model:MC}
 \mathrm{\bbP_{MC}}: \left\{   \eqref{Assign}, \eqref{Location}, \eqref{KP1}, \eqref{KP2}, \eqref{QCL}, \eqref{BC}\right\}\cap \left\{(\ref{linear1}-\ref{linear4})\right\}.
\end{equation}
The numerical results in~\cite{Wang2016:mapping} show that the bound provided by the continuous relaxation of~\eqref{model:MC} is weak. 
To strengthen the formulation, we need stronger valid inequalities. An important
subset of those can be derived by the Reformulation-Linearization-Technique (RLT)~\cite{Sherali1990}, which is a general framework for
generating valid inequalities in higher dimensional space for non-convex discrete and continuous formulations.
We employ the RLT for the assignment constraints \eqref{Assign} producing
$\sum\limits_{r\in R}\abs{V^r}(\abs{V^r}-1)\abs{S}$ linear equations:
\begin{equation} \tag{AC$_{\mathrm{RLT}}$}\label{Assignrlt} 
\sum\limits_{k\in S:k\neq p}y^{rji}_{pk} =x^{rj}_{p}~ \quad \forall (r, i, j, p): i\neq j.
 \end{equation}
\begin{prop}\label{prop:redudant}
 Constraints (\ref{linear1}-\ref{linear3}) are implied by
\eqref{Assignrlt}.
\end{prop}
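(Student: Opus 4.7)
The plan is to establish the three inequalities in the order (\ref{linear2}), (\ref{linear3}), (\ref{linear1}), relying only on \eqref{Assignrlt} together with the nonnegativity \eqref{linear4} and, for (\ref{linear1}), the assignment constraints \eqref{Assign}.

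First, I would rewrite \eqref{Assignrlt} in two equivalent orientations by a routine relabeling of dummy indices, using that a $y$-variable represents a symmetric product and therefore $y^{rij}_{kp}=y^{rji}_{pk}$. For any admissible $(r,i,j,k,p)$ with $i\neq j$ and $k\neq p$, this produces the two identities
\[
\sum_{p'\neq k} y^{rij}_{kp'} = x^{ri}_k \quad \text{and} \quad \sum_{k'\neq p} y^{rij}_{k'p} = x^{rj}_p.
\]
Since every term on the left is nonnegative by \eqref{linear4}, isolating the single term $y^{rij}_{kp}$ in each sum immediately yields (\ref{linear2}) and (\ref{linear3}).

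For (\ref{linear1}) I would start from the first identity rewritten as
\[
y^{rij}_{kp} = x^{ri}_k - \sum_{p'\neq k,\,p'\neq p} y^{rij}_{kp'},
\]
and bound the residual sum using the already-proved (\ref{linear3}) term-by-term, together with \eqref{Assign}:
\[
\sum_{p'\neq k,\,p'\neq p} y^{rij}_{kp'} \leq \sum_{p'\neq k,\,p'\neq p} x^{rj}_{p'} \leq \sum_{p'\neq p} x^{rj}_{p'} = 1 - x^{rj}_p.
\]
Substituting gives $y^{rij}_{kp} \geq x^{ri}_k + x^{rj}_p - 1$, which is (\ref{linear1}).

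The only subtlety is index bookkeeping: one has to track the excluded diagonal term $k=p$ when splitting $y^{rij}_{kp}$ off of the RLT sum, and one uses the symmetry $y^{rij}_{kp}=y^{rji}_{pk}$ to recover both orientations of the RLT identity from the single block \eqref{Assignrlt}. Apart from that, the derivation is a few lines of elementary algebra, and no other constraints of \eqref{bbP} beyond \eqref{Assign}, \eqref{linear4}, and \eqref{Assignrlt} are needed.
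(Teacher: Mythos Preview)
Your argument is correct and follows essentially the same approach as the paper: both use \eqref{Assignrlt} together with nonnegativity to get \eqref{linear2}--\eqref{linear3} immediately, and then derive \eqref{linear1} by expanding $x^{ri}_k$ via \eqref{Assignrlt}, bounding the residual $y$-terms with the already-obtained \eqref{linear3}, and invoking \eqref{Assign}. The paper's proof carries out the same chain with a slightly different regrouping (splitting the sum at $p'=k$ and tracking an extra $-x^{rj}_k$ term), but the substance and the ingredients---\eqref{Assignrlt}, \eqref{Assign}, \eqref{linear4}, and the implicit symmetry $y^{rij}_{kp}=y^{rji}_{pk}$---are identical.
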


\begin{proof}
 It is obvious to see that~\eqref{Assignrlt} imply \eqref{linear2},\eqref{linear3}. 
 Regarding $\eqref{linear1}$, for each $(r,i,j,k,p):k< p,i\neq j$, we have
 \begin{align*}
 x^{ri}_{k}+x^{rj}_{p}-1
 &= y^{rij}_{kp}+\sum\limits_{p'> k:p'\neq p} y^{rij}_{kp'} +\sum\limits_{p'< k} y^{rji}_{p'k}+x^{rj}_{p}-1\\
 &= y^{rij}_{kp}+\sum\limits_{p'> k:p'\neq p} y^{rij}_{kp'} +\sum\limits_{p'< k}y^{rji}_{p'k}-\sum\limits _{s:s\neq p}x^{rj}_s\\
 &=y^{rij}_{kp}-x^{rj}_k+\sum\limits_{p'> k:p'\neq p} (y^{rij}_{kp'}-x^{rj}_{p'})+\sum\limits_{p'< k}
 (y^{rji}_{p'k}-x^{rj}_{p'})\\
 &\leq y^{rij}_{kp}-x^{rj}_k\\
 &\leq y^{rij}_{kp}.
 \end{align*}
 The first equality and the first inequality follow from $\eqref{Assignrlt}$. The second equality comes from $\eqref{Assign}$. 
\end{proof}
We also employ the RLT for \eqref{Location} by multiplying it by itself, leading to 
\begin{align}
 \sum\limits_{(i, j)\in V^r \times V^r:i\neq j} y^{rij}_{kp} \le \theta_k, \quad \forall r\in R, \forall k \neq p \in S^2. \label{Locationrlt} 
\end{align}
\begin{remark}
In~\cite{wang2016:relaxations}, we generate RLT inequalities for the location constraints~\eqref{Location} by 
multiplying both sides $x^{rj}_p$ leading to a new bilinear term $x^{rj}_p\theta_k$. Then we introduce new variables 
 $z^{rj}_{pk}$ and inequalities to relax $z^{rj}_{pk} =x^{rj}_p\theta_k$. 
 This leads to a significant number of constraints and new variables but numerically limited improvement. 
 In contrast, constraints~\eqref{Locationrlt} are compact and effective.
\end{remark}

The RLT based formulation can be presented as follows:
\begin{equation}\tag{$\bbP_{\mathrm{RLT}}$}\label{model:RLT}
 \begin{split}
 \min~& \sum\limits_{k\in \mathcal{S}} F_k\theta_k+\sum\limits_{k\in S}A_k\sum\limits_{r\in R}\sum\limits_{i\in V^r}c^{ri}x^{ri}_k+\sum\limits_{e\in E}W_e\phi_e\notag \\
 &\eqref{Assign}, \eqref{Assignrlt}, \eqref{KP1}, \eqref{KP2}, \eqref{QCL}, \notag \\
 & \eqref{Location},\eqref{Locationrlt}, \eqref{linear4},\notag\\
 &\theta_k,\phi_e, x^{ri}_k \in \zeroone,\; r\in R,i\in V^r, k\in S, e\in E. \notag
\end{split}
\end{equation}
We now present three families of strong linear inequalities exploiting the problem structure. 
For each $r\in R, (i,j)\in L^r$, if there is a required throughput between $i$ and
$j$ (i.e., $f^{rij}>0$), then for each link $e \in E$, we have
\begin{equation}\label{Cut1}
\sum\limits_{(k,p)\in S^2:k\neq p, e\in P_{kp} } y^{rij}_{kp}\leq \phi_e. 
\end{equation}
Similarly, for each $r\in R, (k,p)\in S^2: e\in P_{kp}$, if there
exists some throughput mapped along path $P_{kp}$, then for each link $e \in P_{kp}$ we have
\begin{equation}\label{Cut2}
 \sum\limits_{\{i, j\}\in L^r} y^{rij}_{kp} \le \phi_e.
\end{equation}
Another set of valid inequalities can be generated by exploiting some topological property of the substrate graph. 
Recall that a pair of nodes in a graph is connected if there is path between them. 
For each virtual request $r$, we have $\abs{V}^r$ connected VMs mapped to servers. Thus at least $\abs{V}^r$ servers are connected and 
the number of links connecting these servers should be at least $\abs{V}^r-1$. This can be represented as 
 \begin{align}\label{Cut3}
 \sum\limits_{e \in E} \phi_e \ge \max\limits_{r \in R}\{\abs{V^r}\} -1.
 \end{align}

Numerical results in Section~\ref{sec:numerical} show that~\eqref{Cut1}-\eqref{Cut3} strengthen the formulation evidently. 
We now present the compact MIP model for the exact solution procedure of problem $\eqref{bbP}$
as the first contribution of this paper:
\begin{equation}\tag{$\bbP_1$}\label{model:reform}
 \begin{split}
 \min~& \sum\limits_{k\in \mathcal{S}} F_k\theta_k+\sum\limits_{k\in S}A_k\sum\limits_{r\in R}\sum\limits_{i\in V^r}c^{ri}x^{ri}_k+\sum\limits_{e\in E}W_e\phi_e\notag \\
 &\eqref{Assign}, \eqref{Assignrlt}, \eqref{KP1}, \eqref{KP2}, \eqref{QCL}, \notag \\
 & \eqref{Location},\eqref{Locationrlt}, \eqref{Cut1}-\eqref{Cut3} ,\eqref{linear4}, \notag\\
 &\theta_k,\phi_e, x^{ri}_k \in \zeroone,\; r\in R,i\in V^r, k\in S, e\in E. \notag
\end{split}
\end{equation}
\section{The B\&B algorithm}\label{sec:BB}
As indicated in Section~\ref{sec:numerical}, the computational performance of~\eqref{model:reform} is over 10 times more efficient than 
that of~\eqref{model:MC} showing the effectiveness of valid inequalities. However it can be still challenging when the number of VMs is over 30. 

To further improve the scalability we describe a B\&B algorithm to solve the mapping problem to global optimality. 
We first introduce the Lagrange decomposition based lower bounding procedure, then propose an upper bounding heuristic algorithm.
Finally we describe details of the overall algorithm. 
\subsection{The lower bounds}\label{sec:decompose}
The convergence of a B\&B algorithm largely depends on the strength of lower bounds. 
This section aims at generating novel valid inequalities leading to lower bounds that are strong than those provided by the continuous relaxation of~\eqref{model:reform}.
This is achieved by a Lagrange decomposition scheme which involves evaluating a number of subproblems. 
And for this reason we call these inequalities~\emph{Lagrange cuts}. We then generalize the single request decomposition to a decomposition hierarchy.

\subsubsection{Request based decomposition}\label{sec:LD-request}
We first present a decomposition leading to a number of subproblems associated
with each request, each sever and each link. To this end, we disaggregate constraints \eqref{KP1}, \eqref{KP2},
\eqref{QCL} by reformulating them with a handful of auxiliary variables with
corresponding interpretations below
\begin{itemize}
\item {\makebox[2.5cm] { $w^r_k \in [0, C_k]$\hfill } reserved CPU for request $r$ on server $k$,}
\item {\makebox[2.5cm] { $z^r_k \in [0, M_k]$\hfill} reserved Memory for request $r$ on server $k$, }
\item {\makebox[2.5cm] { $\kappa^r_e \in [0, B_e]$\hfill} reserved bandwidth for request $r$ on link $e$. }
\end{itemize}
The equivalent counterparts of \eqref{KP1}, \eqref{KP2} and \eqref{QCL} are then
\begin{alignat}{3}
 &\sum\limits_{i\in V_r} c^{ri}x^{ri}_k &&\le w^r_k , & r\in R, k\in S,&
 \label{eq:ext1}\\
 &\sum\limits_{i\in V_r} m^{ri}x^{ri}_k&&\le z^r_k , & r\in R, k\in S,&
 \label{eq:ext2}\\
 &\sum\limits_{\substack{i,j\in V_r:\\i\neq j}}\sum\limits_{\substack{k,p
 \in S:\\k \neq p,e \in P_{kp}}} f^{rij}y^{rij}_{kp} && \le
 \kappa^r_e , &r\in R, e\in E, & \label{eq:ext3}\\
  & \sum\limits_{r\in R}w^r_k &&\le C_k\theta_k, & k\in S, &
 \hspace{4em} \vlambda \in \real_+^\abs{S} \label{eq:extension1}\\
 & \sum\limits_{r\in R}z^r_k &&\le M_k\theta_k, & k\in S,
 &\hspace{4em} \vmu \in \real_+^\abs{S}\label{eq:extension2}\\
 & \sum\limits_{r\in R}\kappa^r_e && \leq B_e\phi_e, & e\in E.
 & \hspace{4em} \vsigma \in \real_+^\abs{E}\label{eq:extension3}
\end{alignat}
To make the problem separable by request while ensuring strong lower bounds, we copy variables $\vtheta$ and
$\vphi$ by introducing the following constraints:
\begin{alignat}{5}
 & \theta_k^r \le \theta_k, && r\in R, k\in S,
 &&\veta\in\real^{\abs{R}\times\abs{S}}_+ \label{eq:extension4},\\
 & \phi_e^r \le \phi_e, \hspace{4em}&&e\in
 E,\hspace{4em}&&\vzeta\in\real^{\abs{R}\times\abs{E}}_+ \label{eq:extension5}.
\end{alignat}
\eqref{eq:extension4}-\eqref{eq:extension5} imply the fact that if a server/link is used by one request then it must be on and 
conversely if a sever/link is on then it is not necessarily used by all the requests. As a result, the upper bounds of $w^r_k, z^r_k, \kappa^r_e$ 
can be strengthened to $C_k\theta^r, M_k \theta^r_k$ and $B_e\phi^r_k$ respectively. In addition we relax the connectivity constraint~\eqref{Cut3} 
with $\rho \in \real_+$. 

We replace $\theta_k$ with $\theta_k^r$ in constraints
\eqref{Location} and $\phi_e$ with $\phi_e^r$ in constraints \eqref{QCL}. 
Let us denote the resulting formulation as a~\emph{lifted} version of~\eqref{model:reform}:
\begin{align}\label{model:reform2}
\bbP_{\mathrm{2}} : \{\eqref{model:reform}\} \cap \bigcurly{\eqref{eq:extension1},
 \eqref{eq:extension2}, \eqref{eq:extension3}, \eqref{eq:extension4}, \eqref{eq:extension5}}.
\end{align}
\begin{prop}\label{lemma:projection}
The projection of the feasible region of \eqref{model:reform2} in variables $(\vx,\vy,\vtheta,\vphi)$ is exactly the feasible region of \eqref{model:reform}.
\end{prop}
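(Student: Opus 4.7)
The claim is a projection equality between two polyhedra, so the plan is to prove the two reverse inclusions separately. Throughout I will use the observation that the extra variables $w^r_k, z^r_k, \kappa^r_e, \theta^r_k, \phi^r_e$ are "copy" or "disaggregation" variables: they either sum or dominate the original quantities, so the link with the original constraints is transparent.

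For the forward inclusion (projection of feasible region of $\bbP_2$ is contained in feasible region of $\bbP_1$), I would fix any feasible point of $\bbP_2$ and show each constraint of $\bbP_1$ is satisfied. Constraints shared by the two models (assignment, RLT equations, the McCormick lower bound \eqref{linear4}, the cuts \eqref{Cut1}--\eqref{Cut3}, binarities) are immediate. For \eqref{KP1} I would chain \eqref{eq:ext1} with \eqref{eq:extension1} to get $\sum_{r}\sum_{i\in V^r} c^{ri} x^{ri}_k \le \sum_{r} w^r_k \le C_k \theta_k$; analogously for \eqref{KP2} using $z^r_k$ with $\vmu$, and for \eqref{QCL} using $\kappa^r_e$ with $\vsigma$. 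For \eqref{Location} and the original $\phi_e$-version of \eqref{QCL}, the disaggregated version in $\bbP_2$ uses $\theta^r_k$ (resp. $\phi^r_e$), and dominance \eqref{eq:extension4} (resp. \eqref{eq:extension5}) then restores the original constraint.

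For the reverse inclusion, I would pick any $(\vx,\vy,\vtheta,\vphi)$ feasible for $\bbP_1$ and lift it by the natural choices
\[
\theta^r_k := \theta_k,\qquad \phi^r_e := \phi_e,\qquad w^r_k := \sum_{i\in V^r} c^{ri} x^{ri}_k,\qquad z^r_k := \sum_{i\in V^r} m^{ri} x^{ri}_k,
\]
\[
\kappa^r_e := \sum_{\substack{k,p\in S:\\ k\neq p,\ e\in P_{kp}}}\sum_{\{i,j\}\in L^r} f^{rij} y^{rij}_{kp}.
\]
With these definitions, \eqref{eq:ext1}--\eqref{eq:ext3} hold with equality, \eqref{eq:extension4}--\eqref{eq:extension5} hold trivially, and \eqref{eq:extension1}--\eqref{eq:extension3} reduce exactly to the original \eqref{KP1}, \eqref{KP2}, \eqref{QCL} inequalities known from feasibility in $\bbP_1$. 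The only point worth double checking is that the tightened box bounds $w^r_k \in [0, C_k\theta^r_k]$ etc. hold; but with $\theta^r_k = \theta_k$, when $\theta_k = 0$ the capacity constraints of $\bbP_1$ force $x^{ri}_k = 0$ for all $r,i$, hence $w^r_k = z^r_k = 0$, and a similar argument using $y^{rij}_{kp} \le x^{ri}_k$ (from the RLT/McCormick bounds available in $\bbP_1$) together with \eqref{QCL} gives $\kappa^r_e \le B_e \phi_e$.

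I do not expect any genuine obstacle: the whole statement is essentially bookkeeping that confirms $\bbP_2$ is a valid extended formulation of $\bbP_1$. The only mildly delicate point is the tightened upper bound on $\kappa^r_e$ in the lifting direction, where one must combine $y^{rij}_{kp} \le x^{ri}_k$ with \eqref{QCL} (or observe that when $\phi_e = 0$ every relevant $y$ must be zero); everything else is direct substitution.
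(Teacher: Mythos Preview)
Your argument is correct and in fact more complete than the paper's own proof. The paper only sketches the forward inclusion (``truncate the extra coordinates of any feasible point of $\bbP_2$ to obtain a feasible point of $\bbP_1$''), while you additionally supply the reverse inclusion via the natural lifting $\theta^r_k:=\theta_k$, $\phi^r_e:=\phi_e$, $w^r_k:=\sum_i c^{ri}x^{ri}_k$, etc., and you verify the tightened box bounds, which the paper does not discuss. The approaches coincide on the forward direction; your added lifting is the standard (and necessary) way to justify the ``exactly'' in the statement.
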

\begin{proof}
For any feasible point $\pv = (\vw, \vz, \vkappa, \vtheta^r, \vphi^r,\vx,\vy, \vtheta, \vphi)$ in~\eqref{model:reform2}, one can get a feasible point in~\eqref{model:reform} by truncating the components of variables $(\vx,\vy,\vtheta,\vphi)$
from $\vv$. 
\end{proof}

Relaxing the five sets of constraints with associated Lagrange multipliers
$\vlambda, \vmu, \vsigma, \veta, \vzeta, \rho$ leads to the Lagrange function over variables
$\left(\vlambda,\vmu,\vsigma, \veta, \vzeta, \rho; \vw, \vz, \vkappa,
\vx,  \vtheta,\vphi \right)$. For ease of notation, let $\pv = (\vw, \vz, \vkappa,\vx,\vtheta, \vphi)\in \real^p_+, 
\dv = (\vlambda,\vmu,\vsigma, \veta, \vzeta, \rho) \in \real^{d}_+$, where $p$ and $d$ represent the respective number of 
variables in primal and dual space. The Lagrange is then
\begin{equation}\label{eq:lagrange}
 \begin{split}
\La\left(\dv, \pv \right)= &\sum\limits_{r\in
 R}\left(\sum\limits_{k\in S} \left(A_k\sum\limits_{i\in
 V_r}c^{ri}x^{ri}_k+w^r_k\lambda_k+\mu_kz^r_k + \theta^r_k\eta^r_k \right)+\sum\limits_{e\in E}
 \left(\sigma_e\kappa^r_e + \zeta^r_e \phi^r_e \right)\right)\\ 
   & + \sum\limits_{k}
 \left(F_k- C_k\lambda_k- M_k\mu_k - \sum\limits_{r\in R} \eta^r_k \right)\theta_k\\
 & + \sum\limits_{e} \left(W_e- B_e\sigma_e - \sum\limits_{r \in R} \zeta^r_e - \rho
 \right)\phi_e \\
 & + (\max\limits_{r\in R}\abs{V^r} - 1) \rho.
 \end{split}
\end{equation}
For the sake of clarity let $ \tau = (\max\limits_{r\in R}\abs{V^r} - 1) \rho$.
For any $\vv^* \in \real^d_+$, it holds that the infimum 
\begin{align}\label{eq:dualeval}
\Dual(\vv^*) = \inf\limits_{\vv \in \X}\La(\vv^*, \vv)
\end{align}
provides a lower bound of the optimal value of the mapping problem, where $\X$ represents the remaining constraint set. 
Since $\X$ is compact, the supreme is attainable.
Given $\vv^*$, the Lagrange function is separable w.r.t. each request, server and edge. 
Thus evaluating $\min \limits_{\vv}\La(\vv^*, \vv)$ reduces to evaluating $\abs{R}+\abs{S}+\abs{E}$ subproblems.

For each $r\in R$, we evaluate $\Dual^r(\dv)$ by solving
\begin{align}
 \min \hspace{2em}& \sum\limits_{k\in S}\left(A_k\sum\limits_{i\in
 V_r}x^{ri}_kc^{ri}+\lambda_kw^r_k+\mu_kz^r_k + \theta^r_k\eta^r_k
\right)+\sum\limits_{e\in E} \left(\sigma_e\kappa^r_e + \zeta^r_e \phi^r_e
\right) \tag{\textrm{Sub$_r$}} \label{eq:subr}\\
 \subto\; &\eqref{Assign}, \eqref{Assignrlt}, \eqref{QCL}, 
  \eqref{Location},\eqref{Locationrlt}, \eqref{Cut1}-\eqref{Cut3} ,\eqref{linear4},
 \eqref{eq:ext1},\eqref{eq:ext2},\eqref{eq:ext3}, \notag\\ 
 &w^r_k \le C_k \theta^r_k, ~\forall k \in S \label{eq:cpur} \\
 &z^r_k \le M_k \theta^r_k, ~\forall k \in S \label{eq:memoryr}\\
  &\kappa^r_e \le B_e \phi^r_e, ~\forall e \in E, \label{eq:bandwidthr}\\
   & x^{ri}_k, \theta_k^r, \phi_e^r,  \in \{0, 1\} ~\forall i, k, e. \notag
 \end{align}
As server $k$ is used by request $r$ if and only if there is a VM mapped to server $k$, We can strengthen~\eqref{Location} in~\eqref{eq:subr} as follows
\begin{align}\label{eq:validsubr}
&\sum\limits_{i \in V^r} x^{ri}_k = \theta_k^r.
\end{align}
Moreover, given that each request graph $G^r$ is connected, constraints~\eqref{Cut3} in~\eqref{eq:subr} can be replaced with
\begin{subequations}\label{eq:Cut3r}
\begin{align}
&\sum\limits_{e \in E} \phi_e^r \ge \sum\limits_{k \in S} \theta_k^r -1,  \\
 \theta_k^r & \le \sum\limits_{e \in E: k \in e} \phi_e^r , \; \; \; k\in S, 
\end{align}
\end{subequations}
as the graph induced by servers that are used by request $r$ is also connected.

In what follows, let $X^r$ be the feasible region of~\eqref{eq:subr} in primal variables $(\vw^r, \vz^r, \vkappa^r,\vx^r, \vtheta^r, \vphi^r)$, i.e., 
\begin{align*}
X^r = \large\{&(\vw^r, \vz^r, \vkappa^r,\vx^r, \vtheta^r, \vphi^r): \eqref{Assign}, \eqref{Assignrlt}, \eqref{QCL},
  \eqref{Locationrlt}, \eqref{Cut1}-\eqref{Cut2} ,\eqref{linear4},\\
  &\eqref{eq:ext1},\eqref{eq:ext2},\eqref{eq:ext3}, \eqref{eq:validsubr}, \eqref{eq:Cut3r}, \eqref{eq:cpur}-\eqref{eq:bandwidthr}, 
   x^r_{ik}, \theta^r_k, \phi^r_e \in \{0, 1\}\large\}.
\end{align*}

Similarly, for each $k\in S$, and $e\in E$, we evaluate $\Dual^k(\vv^*)$ and $\Dual^e(\vv^*)$ via
 \begin{align}
 \Dual^k(\vlambda, \vmu, \veta) &= \min\limits_{\theta_k \in \cube} \left(F_k-
 C_k\lambda_k- M_k\mu_k -  \sum\limits_{r\in R} \eta^r_k \right)\theta_k, \label{eq:subserver}\\
 \Dual^e (\vsigma, \vzeta)&= \min\limits_{\phi_e \in \cube} \left(W_e- B_e\sigma_e -
 \sum\limits_{r \in R} \zeta^r_e -\rho \right)\phi_e.\label{eq:sublink}
\end{align}
It is straightforward to see that the optimal solution of the above problem is $1$ if the coefficient is negative and $0$ if nonnegative.
Consequently, the dual objective denoted by $\Dual(\dv)$ is additively composed of functions $\{\Dual^r\}_{r \in R}$, 
$\{\Dual^k\}_{k \in S}$ and $\{\Dual^e\}_{e \in E}$ as follows
\begin{equation}\label{eq:dualbound} 
 \Dual(\dv)= \sum\limits_{r\in R} \Dual^r(\dv)+\sum\limits_{k\in S}
 \Dual^k(\dv) +\sum\limits_{e\in E} \Dual^e (\dv) +  \tau
 \end{equation}
 Let us assume that primal problem~\eqref{model:reform2} is strictly feasible. Then the Slater's condition holds and the dual problem has a nonempty 
 compact set of maximum points~\cite{lemarechal2001}.  Thus the dual problem can be defined below
 \begin{equation}\label{eq:dual}
 \max\limits_{\dv \in \real^d_+} \Dual(\dv). 
 \end{equation} 
 By weak duality, it holds that 
 \begin{align*}
 \max\limits_{\dv \in \real^d_+} \Dual(\dv) \le z^*
 \end{align*}
 where $z^*$ is the optimum of~\eqref{model:reform}. 
 
 \subsubsection{On the strength of lower bounds}\label{sec:lb}
 It is known that the strong duality generally does not hold between~\eqref{model:reform} and~\eqref{eq:dual}.
 In other words, the duality gap exists. Naturally we may ask: 
 \begin{quotation}
 \textit{Could we get a priori knowledge or intuition on the quality of the lower bound without actual numerical experiments?}
 \end{quotation}
Much effort has been devoted to relevant investigations in  different contexts (see~e.g.~\cite{geoffrion1974,guignard1987,lemarechal2001,Lemarechal2001:gap}).
Among them, we recall the following theorem.
 \begin{thm}\cite{geoffrion1974}\label{eq:lrbound}
 Consider a mixed integer linear problem expressed as
 \begin{align*}
 \min\limits_{x} \bigcurly{cx :  ~Ax \le b, x \in X}.
 \end{align*}
 where $X$ is bounded and compact.  Relaxing constraints $Ax \le b$ with Lagrange multipliers $\lambda \ge 0$ leads to 
 \begin{align*}
 g(\lambda) = \min\limits_{x \in X}~cx+ \lambda^T(Ax -b).
 \end{align*}
 Then it holds that 
 \begin{align*}
 \max\limits_{\lambda \ge 0}~ g(\lambda) = \min\bigcurly{cx: ~Ax \le b, x\in \conv(X)}
  \end{align*}
  where $\conv (X)$ represents the convex hull of $X$.
 \end{thm}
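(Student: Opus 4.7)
The plan is to establish the identity by proving the two inequalities separately: the easy $\le$ via weak Lagrangian duality, and the harder $\ge$ via strong LP duality applied to the problem over $\conv(X)$. The pivotal technical fact, used in both directions, is that for any vector $\lambda \ge 0$
\begin{equation*}
g(\lambda) \;=\; \min_{x \in X}\bigl[cx + \lambda^{T}(Ax - b)\bigr] \;=\; \min_{x \in \conv(X)}\bigl[cx + \lambda^{T}(Ax - b)\bigr],
\end{equation*}
which holds because the bracketed expression is affine in $x$ and a linear function on a compact convex set attains its minimum at an extreme point, each extreme point of $\conv(X)$ lying in $X$.

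For the $\le$ direction, I would fix any $\lambda \ge 0$ and any $x \in \conv(X)$ with $Ax \le b$. Then $\lambda^{T}(Ax - b) \le 0$, so $cx + \lambda^{T}(Ax - b) \le cx$. Minimizing both sides over $\{x \in \conv(X) : Ax \le b\}$ and combining with the identity above gives $g(\lambda) \le \min\{cx : Ax \le b, x \in \conv(X)\}$. Taking the supremum in $\lambda$ yields one inequality.

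For the $\ge$ direction, I would recast $\min\{cx : Ax \le b, x \in \conv(X)\}$ as a genuine linear program. Since $X$ arises from a bounded mixed-integer description, it decomposes as a finite union of polytopes and hence $\conv(X)$ is polyhedral, say $\conv(X) = \{x : Dx \le d\}$. The problem becomes $\min\{cx : Ax \le b,\; Dx \le d\}$, to which strong LP duality applies under the stated feasibility hypothesis. Partially dualizing only the constraints $Ax \le b$ produces a dual of the form $\max_{\lambda \ge 0} \min_{x \in \conv(X)} [cx + \lambda^{T}(Ax - b)]$, whose inner value is exactly $g(\lambda)$ by the identity above. Strong LP duality then delivers the reverse inequality, closing the loop.

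The main obstacle is the structural claim that $\conv(X)$ is polyhedral and that the linear-objective min over $X$ coincides with that over $\conv(X)$. In the purely $0$--$1$ case this is immediate since $X$ is finite, but for general mixed-integer $X$ one must invoke the representation of bounded mixed-integer sets as finite unions of polytopes together with the Krein--Milman principle that a linear function on a compact convex set attains its minimum at an extreme point. Once that hurdle is cleared, the remainder of the argument reduces to a textbook application of weak and strong LP duality.
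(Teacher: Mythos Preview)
The paper does not supply its own proof of this theorem: it is stated with the attribution \cite{geoffrion1974} and simply recalled as a known result (``we recall the following theorem''), then immediately used to derive Corollary~\ref{cor:quality}. There is therefore no in-paper argument to compare against.

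Your proposal is the standard proof of Geoffrion's result and is essentially correct. The key identity $\min_{x\in X}[cx+\lambda^{T}(Ax-b)]=\min_{x\in\conv(X)}[cx+\lambda^{T}(Ax-b)]$ is the heart of the matter, and you justify it properly. The $\le$ direction is fine. For the $\ge$ direction your appeal to strong LP duality over the polyhedron $\conv(X)$ is the right idea; the only point to tighten is that the theorem as stated does not explicitly assume feasibility of $\{x:Ax\le b\}\cap\conv(X)$, so strictly speaking you should also dispose of the infeasible case (both sides equal $+\infty$, with the Lagrangian dual unbounded above via an appropriate Farkas-type certificate). Your remark that $\conv(X)$ is polyhedral because a bounded mixed-integer set is a finite union of polytopes is the correct structural input.
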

Later Lemar\'echal generalized the above result for general Mixed Integer Nonlinear Problems~(MINLPs).~We refer interested readers to~\cite{lemarechal2001,Lemarechal2001:gap} 
for details. 
\begin{cor}\label{cor:quality}
 The optimum of~\eqref{eq:dual} dominates (greater than or equal to) that of continuous relaxation of
 \eqref{model:reform2} and it amounts to outer-approximating the convex hull of the primal feasible region with the following
 set
 \begin{align}\label{cor:primalcharacter}
 \Sy = \conv \{\bigtimes_{r\in R} X^r\} \bigcap \bigcurly{\vv: \eqref{Cut3},\eqref{eq:extension1}, 
 \eqref{eq:extension2}, \eqref{eq:extension3},
 \eqref{eq:extension4},\eqref{eq:extension5}}, 
 \end{align}
 where $X^r$ denote the feasible region of subproblem~\eqref{eq:subr} and $\bigtimes$ denotes the Cartesian product operation: $\bigtimes_{r\in R} X^r = \bigcurly{(x_1, \dots, x_{\abs{R}}):
 x_i \in X^r, \; r \in R}$.
\end{cor}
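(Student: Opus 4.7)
The plan is to derive this corollary as a direct application of Geoffrion's Theorem~\ref{eq:lrbound} to the lifted formulation~\eqref{model:reform2}. First I would identify the split: the constraints we dualize are precisely the six coupling families~\eqref{eq:extension1}--\eqref{eq:extension5} together with the connectivity inequality~\eqref{Cut3}, associated with the non-negative multipliers $\vlambda,\vmu,\vsigma,\veta,\vzeta,\rho$ already introduced. What remains after removing these coupling constraints are (i) the per-request constraints defining each $X^r$, which involve only variables carrying the superscript $r$, and (ii) the binary restrictions on the global variables $\vtheta$ and $\vphi$, which otherwise appear only inside the dualized constraints. Consequently the residual feasible set factors as the Cartesian product
$$X \;=\; \Bigl(\bigtimes_{r\in R}X^r\Bigr)\times\{0,1\}^{|S|}\times\{0,1\}^{|E|}.$$

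Next I would apply Theorem~\ref{eq:lrbound} essentially verbatim. Because convex hulls commute with Cartesian products, $\conv(X)=\conv\bigl(\bigtimes_{r\in R}X^r\bigr)\times[0,1]^{|S|}\times[0,1]^{|E|}$. Geoffrion's theorem then states that $\max_{\dv\ge 0}\Dual(\dv)$ equals the minimum of the original linear objective over those $\pv\in\conv(X)$ that satisfy the dualized constraints. By the definition of $\Sy$ in the statement, this minimum is exactly $\min\{\text{obj}(\pv):\pv\in\Sy\}$, with the $[0,1]$ boxes on $\vtheta,\vphi$ either read as implicit in the intersection or obtained from combining~\eqref{eq:extension4}--\eqref{eq:extension5} with the bounds already built into each $X^r$. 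This establishes the ``outer-approximation'' characterization asserted by the corollary.

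The dominance over the continuous LP relaxation of~\eqref{model:reform2} then follows by a simple inclusion argument. The LP relaxation replaces each $X^r$ by its linear-programming polytope $\mathrm{LP}(X^r)$, which contains $\conv(X^r)$ because every integer feasible point of $X^r$ is LP-feasible and hence so is any convex combination. Intersecting with the same coupling constraints preserves the inclusion, so $\Sy$ is a subset of the continuous relaxation feasible set, and minimizing the same linear objective over the smaller set $\Sy$ yields a value that is at least as large.

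The one point I expect to need care on is verifying the hypothesis of Theorem~\ref{eq:lrbound}, namely compactness of $X$. Boundedness of each $X^r$ follows because the assignment constraints~\eqref{Assign} confine $\vx^r$ to $[0,1]$, the McCormick inequalities~(\ref{linear2})--(\ref{linear4}) then bound $\vy^r$ in $[0,1]$, the copy variables $\vtheta^r,\vphi^r$ are binary, and the auxiliary resources $\vw^r,\vz^r,\vkappa^r$ are explicitly capped by $C_k,M_k,B_e$ through~\eqref{eq:cpur}--\eqref{eq:bandwidthr}; closedness is immediate since each $X^r$ is a finite union of polyhedra. Because the McCormick linearization has already eliminated bilinear terms, each subproblem is mixed-integer \emph{linear}, so the original MILP version of the theorem suffices and no appeal to the Lemar\'echal MINLP extension is required.
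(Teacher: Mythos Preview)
Your proposal is correct and follows essentially the same approach as the paper: both apply Geoffrion's Theorem~\ref{eq:lrbound} directly to identify the Lagrangian dual with minimization over $\Sy$, and then invoke the inclusion $\conv(X^r)\subseteq\mathrm{LP}(X^r)$ to deduce dominance over the continuous relaxation. The paper's own proof is only two sentences, so your version simply fills in the details---the factorization of the residual set, the commutation of $\conv$ with Cartesian products, and the verification of compactness---that the paper leaves implicit.
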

\begin{proof}
It follows directly from Theorem~\ref{eq:lrbound} that the Lagrange decomposition amounts to constructing $\Sy$. Since $\conv X^r$ is the tightest convex relaxation of $X^r$, the optimum of~\eqref{eq:dual} is greater than 
or equal to the linear relaxation objective value. 
\end{proof}
\subsubsection{The choice of Lagrange multipliers} 
Even though each subproblem~\eqref{eq:subr} is easier than
problem~\eqref{model:reform}, they are still computationally costly; subgradient
algorithms~\cite{subgradient} 
often take hundreds of iterations to converge. To reduce the number of iterations evidently, we propose to
first solve the continuous relaxation of the extended
formulation~\eqref{model:reform2} to optimality and take the dual multipliers of
constraints~\eqref{eq:extension1}- \eqref{eq:extension5} 
as the initial values the Lagrange multipliers. With these initial values, we evaluate~\eqref{eq:dualbound} and obtain $l^1$ as the optimal value. 
Let $l^{cts}$ be optimal value of the continuous relaxation of~\eqref{model:reform2}. 
\begin{prop}\label{prop:lower}
It holds that $l^{1} \ge l^{cts}. $ 
\end{prop}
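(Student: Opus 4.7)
The plan is to recognise $l^1$ as the partial Lagrangian evaluated at the LP-optimal duals and use LP strong duality together with the obvious containment of the integer-restricted feasible set in the LP-relaxed one.

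First I would set up the partial Lagrangian cleanly. Let $\X$ denote the feasible set of~\eqref{model:reform2} in all primal variables $\pv=(\vw,\vz,\vkappa,\vtheta^r,\vphi^r,\vx,\vtheta,\vphi)$ after the five coupling families \eqref{eq:extension1}--\eqref{eq:extension5} (and the connectivity cut~\eqref{Cut3}) have been removed, still subject to the binary restrictions inside each request. Let $\widehat{\X}$ denote its continuous relaxation (integrality dropped). Then the continuous relaxation of~\eqref{model:reform2} is the LP
\begin{equation*}
l^{cts} \;=\; \min_{\pv\in\widehat{\X}}\; c^\top\pv \quad \text{s.t.}\quad \eqref{eq:extension1}\text{--}\eqref{eq:extension5},\; \eqref{Cut3}.
\end{equation*}
Denote by $\dv^\star=(\vlambda^\star,\vmu^\star,\vsigma^\star,\veta^\star,\vzeta^\star,\rho^\star)\ge 0$ the optimal dual multipliers of these coupling constraints in the LP above. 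These are precisely the $\dv^*$ that the paragraph above the proposition picks up from the LP relaxation.

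Next I would invoke LP strong duality in the form of the partial Lagrangian. Because dualising only the coupling rows of a linear program still satisfies strong duality,
\begin{equation*}
l^{cts} \;=\; \min_{\pv\in\widehat{\X}}\; \La(\dv^\star,\pv),
\end{equation*}
where $\La$ is the Lagrangian \eqref{eq:lagrange} (the connectivity constant $\tau$ is folded in through $\rho^\star$). In other words, at the LP-optimal duals the partial Lagrangian minimised over the continuously-relaxed separable set reproduces $l^{cts}$ exactly.

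Finally I would compare this with $l^1$. By definition of the evaluation in~\eqref{eq:dualeval}--\eqref{eq:dualbound},
\begin{equation*}
l^1 \;=\; \Dual(\dv^\star) \;=\; \min_{\pv\in\X}\; \La(\dv^\star,\pv).
\end{equation*}
Since $\X\subseteq\widehat{\X}$ (the integer restrictions only shrink the feasible set), minimising the same objective over a smaller set can only increase its value:
\begin{equation*}
l^1 \;=\; \min_{\pv\in\X}\La(\dv^\star,\pv) \;\ge\; \min_{\pv\in\widehat{\X}}\La(\dv^\star,\pv) \;=\; l^{cts},
\end{equation*}
which is the desired inequality.

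The only delicate step is the first one, namely writing~\eqref{model:reform2} in a form where Geoffrion/LP strong duality applies cleanly to the five coupling families, so that $\dv^\star$ is genuinely an optimal dual vector and the partial Lagrangian value at $\dv^\star$ is exactly $l^{cts}$ rather than just a lower bound. Everything after that is the standard observation that an integer minimum dominates its continuous counterpart when the objective is fixed.
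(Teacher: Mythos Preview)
Your proof is correct and follows essentially the same approach as the paper. The paper's argument decomposes the Lagrangian into the separable subproblems $\Dual^r,\Dual^k,\Dual^e$ and compares each integer subproblem value with its continuous relaxation $\overline\Dual^r$, then invokes LP strong duality to identify the sum of the continuous values with $l^{cts}$; you carry out the identical two steps (LP strong duality at $\dv^\star$, then $\X\subseteq\widehat\X$) at the aggregate level without first splitting into pieces, which is a harmless presentational difference since the Lagrangian is separable.
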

\begin{proof}
Let $\bar \vv^*$ be the corresponding dual values w.r.t to constraints~\eqref{eq:extension1}- \eqref{eq:extension5} and $\overline \Dual^r, \overline \Dual^k, \overline \Dual^e$ be the respective optimal value of the continuous relaxation of~subproblem~\eqref{eq:subr},~\eqref{eq:subserver},~\eqref{eq:sublink} associated with request $r \in R$ with the Lagrange multipliers $\bar \vv^*$.
\begin{enumerate} 
\item  Due to integrality constraints in~\eqref{eq:subr}, it holds that $\overline \Dual^r \le \Dual^r$ for each $r \in R$; thus $l^1 \ge \sum\limits_{r\in R} \overline \Dual^r(\bar \dv)+\sum\limits_{k\in S}
 \overline \Dual^k(\bar \dv) +\sum\limits_{e\in E} \overline \Dual^e (\bar \dv) + \tau$.
\item Let $\bar \vv$ be the optimal solution of the continuous relaxation of~\eqref{model:reform2}, then 
$\sum\limits_{r\in R} \overline \Dual^r(\bar \dv)+\sum\limits_{k\in S} \overline \Dual^k(\bar \dv) +\sum\limits_{e\in E} \overline \Dual^e (\bar \dv) $
 is the dual optimal value of the continuous relaxation of~\eqref{model:reform2}. By strong duality we have $l^{cst} = \sum\limits_{r\in R} \overline \Dual^r(\bar \dv)+\sum\limits_{k\in S}
 \overline \Dual^k(\bar \dv) +\sum\limits_{e\in E} \overline \Dual^e (\bar \dv) + \tau$.
\end{enumerate}
Combining the above arguments leads to $l^1 \ge l^{cst}$.
\end{proof}
\subsubsection{Lagrange cuts}\label{sec:lagrangecuts}
In this section we show that we can generate some valid inequalities to capture the strength of the Lagrange
decomposition characterized by \eqref{cor:primalcharacter}. Thus we call these inequalities~\emph{Lagrange cuts}. 
As a result we can strengthen the linear relaxation of~\eqref{model:reform2} upon each resolution
of~\eqref{eq:subr}.
\begin{prop}
For each $r\in R$,~\eqref{eq:subr} amounts to finding a supporting hyperplane of $\conv(X^r)$ with outer normal vector 
$(-\vlambda, -\vmu,  -\vsigma, -\{A_kc^{ri}\}_{i,k}, -\veta, -\vzeta)$ 
defined by equation 
\begin{align*}
H^r(\vv) = \Dual^r(\vv^*) - \sum\limits_{k\in S}(A_k\sum\limits_{i\in V_r}x^{ri}_kc^{ri}+\lambda_kw^r_k+\mu_kz^r_k + \theta^r_k\eta^r_k )-\sum\limits_{e\in E} (\sigma_e\kappa^r_e + \zeta^r_e \phi^r_e) = 0
\end{align*}
where $\vv = (\vw^r, \vz^r, \vkappa^r,\vx^r, \vtheta^r, \vphi^r)$ and $\Dual^r(\vv^*)$ is the optimal value of~\eqref{eq:subr}. 
\end{prop}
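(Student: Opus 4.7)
The plan is to recognize the subproblem as a linear optimization in disguise and then read off the supporting hyperplane directly from the optimality condition. First I would collect the coefficients of the objective of~\eqref{eq:subr} into a single vector $\vc$, aligned componentwise with $\vv=(\vw^r,\vz^r,\vkappa^r,\vx^r,\vtheta^r,\vphi^r)$, so that the objective becomes $\vc^\top\vv$ where
\[
\vc=\bigl(\vlambda,\,\vmu,\,\vsigma,\,\{A_k c^{ri}\}_{i,k},\,\veta,\,\vzeta\bigr).
\]
With this notation, $\Dual^r(\vv^*)=\min_{\vv\in X^r}\vc^\top\vv$, and the equation defining the hyperplane rewrites simply as $\vc^\top\vv=\Dual^r(\vv^*)$.

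Next I would invoke the elementary fact that a linear function attains the same minimum over a set and over its convex hull, so
\[
\Dual^r(\vv^*)\;=\;\min_{\vv\in X^r}\vc^\top\vv\;=\;\min_{\vv\in\conv(X^r)}\vc^\top\vv.
\]
This immediately yields the supporting hyperplane property: every point $\vv\in\conv(X^r)$ satisfies $\vc^\top\vv\ge\Dual^r(\vv^*)$, with equality achieved at any optimal solution of~\eqref{eq:subr}. Equivalently, $\conv(X^r)$ lies entirely in the closed half-space $\{\vv:H^r(\vv)\le 0\}$ and touches the boundary hyperplane $\{H^r(\vv)=0\}$, which is exactly the definition of a supporting hyperplane.

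Finally I would verify the direction of the outer normal. Since $\conv(X^r)$ sits on the side where $\vc^\top\vv\ge\Dual^r(\vv^*)$, the outward-pointing normal to $\conv(X^r)$ at the contact hyperplane is in the direction of decreasing $\vc^\top\vv$, i.e. along $-\vc$, which gives exactly
$(-\vlambda,-\vmu,-\vsigma,-\{A_k c^{ri}\}_{i,k},-\veta,-\vzeta)$ as claimed.

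The statement is really almost definitional once one writes the subproblem as a linear program; there is no hard obstacle. The only care needed is bookkeeping: matching the block structure of the coefficient vector with the variable vector $\vv$ in the correct order, and keeping the sign convention for \emph{outer} normal consistent with the fact that $\conv(X^r)$ is on the $\ge\Dual^r(\vv^*)$ side of the hyperplane. Once these conventions are fixed, the three bullets above (rewriting as linear program, lifting the minimum to $\conv(X^r)$, and reading off the normal) close the argument in a few lines.
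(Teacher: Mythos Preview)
Your proposal is correct and follows essentially the same route as the paper: the paper's proof simply notes that, by definition of the optimal value $\Dual^r(\vv^*)$, the inequality $H^r(\vv)\le 0$ holds for every $\vv\in X^r$ and is tight at some minimizer $\vv'$. Your version spells out the same observation in more detail (writing the objective as $\vc^\top\vv$, extending the inequality to $\conv(X^r)$, and checking the sign of the outer normal), but the underlying argument is identical.
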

\begin{proof}
By the definition of~\eqref{eq:subr}, it holds that 
\begin{align}\label{eq:lagrangecut}
H^r(\vv) \le 0
\end{align}
 for any point $\vv \in X^r$ and it 
exists at least one point $\vv' \in X^r$ such that $H^r(\vv') = 0$, ending the proof. 
\end{proof}
The proposition above provides  a non-trivial valid inequality $H^r(\vv) \le 0$ for each $\vv \in \conv X^r$ and therefore 
it is also valid for the convex set defined in $\eqref{cor:primalcharacter}$. Thus we can append these inequalities to strengthen 
the linear relaxation of~\eqref{model:reform2}. Moreover we show that the resulting linear relaxation value of~\eqref{model:reform2} 
will be greater than or equal to the current Lagrange lower bound. 
\begin{prop}\label{prop:cut}
The optimal value of the continuous relaxation of~\eqref{model:reform2}
    augmented by $H^r(\vv) \le 0 \; (r\in R)$ is greater than or equal to $l^{1}$.
\end{prop}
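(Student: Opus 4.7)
The plan is to combine the Lagrange cuts $H^r(\vv) \le 0$ with the lifting constraints~\eqref{eq:extension1}--\eqref{eq:extension5} and the connectivity inequality~\eqref{Cut3}, all of which are present in the continuous relaxation of~\eqref{model:reform2}, in order to bound $l^1$ from above by the~\eqref{model:reform2} objective at every feasible point of the strengthened relaxation. Throughout, I write $\bar\vv^* = (\bar\vlambda, \bar\vmu, \bar\vsigma, \bar\veta, \bar\vzeta, \bar\rho) \ge 0$ for the multipliers extracted from the continuous-relaxation dual, so that $l^1 = \sum_{r} \Dual^r(\bar\vv^*) + \sum_{k} \Dual^k(\bar\vv^*) + \sum_{e} \Dual^e(\bar\vv^*) + \tau$ with $\tau = (\max_r \abs{V^r}-1)\bar\rho$.

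First I would apply $H^r(\vv) \le 0$ for every $r \in R$ and sum the resulting inequalities. This upper-bounds $\sum_r \Dual^r(\bar\vv^*)$ by a linear expression in the disaggregated variables. Invoking the nonnegativity of all multipliers together with $\sum_r w^r_k \le C_k \theta_k$, $\sum_r z^r_k \le M_k \theta_k$, $\sum_r \kappa^r_e \le B_e \phi_e$, $\theta^r_k \le \theta_k$ and $\phi^r_e \le \phi_e$, this bound is rewritten purely in terms of the aggregate variables $(\vx, \vtheta, \vphi)$.

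Next I would bound $\Dual^k$ and $\Dual^e$ themselves. Each is a single-variable minimum of a linear function, equal to $\min(0,\alpha)$ for the appropriate coefficient $\alpha$, so a two-case argument on the sign of $\alpha$ gives $\Dual^k(\bar\vv^*) \le (F_k - C_k\bar\lambda_k - M_k\bar\mu_k - \sum_r \bar\eta^r_k)\theta_k$ and $\Dual^e(\bar\vv^*) \le (W_e - B_e\bar\sigma_e - \sum_r \bar\zeta^r_e - \bar\rho)\phi_e$, valid for \emph{every} $\theta_k,\phi_e \in [0,1]$ rather than only the binary values. The accumulated $-\bar\rho\,\phi_e$ contribution is absorbed using~\eqref{Cut3}: since $\sum_e \phi_e \ge \max_r\abs{V^r}-1$ and $\bar\rho \ge 0$, we have $\tau - \bar\rho \sum_e \phi_e \le 0$.

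Adding the three bounds, all multiplier coefficients $\bar\lambda_k,\bar\mu_k,\bar\eta^r_k,\bar\sigma_e,\bar\zeta^r_e,\bar\rho$ cancel exactly, which is the whole design of the Lagrange cuts, leaving only $\sum_k F_k\theta_k + \sum_k A_k \sum_r\sum_i c^{ri}x^{ri}_k + \sum_e W_e\phi_e$, the~\eqref{model:reform2} objective. Since this shows that the objective value at any feasible point of the augmented continuous relaxation is at least $l^1$, the optimum of that relaxation is at least $l^1$. The main subtlety is the sign-case argument that extends the discrete minima defining $\Dual^k$ and $\Dual^e$ to linear majorants valid on the full $[0,1]$ box; once this is in place, the rest is bookkeeping of cancellations engineered into the Lagrange cuts by construction.
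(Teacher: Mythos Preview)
Your proposal is correct and is essentially the paper's own argument, just unwrapped. The paper aggregates the Lagrange cuts, adds the server/edge term $b$ to both sides, and then appeals to ``weak duality'' to conclude $\sum_r \pi^r + b \le f(\vv)$ on the feasible set; your explicit use of the lifting constraints~\eqref{eq:extension1}--\eqref{eq:extension5}, the sign-case bound $\min(0,\alpha)\le \alpha t$ for $t\in[0,1]$, and the cancellation of multiplier terms (with the residual $\tau-\bar\rho\sum_e\phi_e$ absorbed via~\eqref{Cut3}) is precisely that weak-duality computation written out in full.
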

\begin{proof}
Let $\pi^r = \sum\limits_{k\in S}(A_k\sum\limits_{i\in
    V_r}x^{ri}_kc^{ri}+\lambda_kw^r_k+\mu_kz^r_k + \theta^r_k\eta^r_k
    )+\sum\limits_{e\in E} (\sigma_e\kappa^r_e + \zeta^r_e \phi^r_e) $. 
    Then aggregating Lagrange cuts~\eqref{eq:lagrangecut} over $r \in R$ leads to 
    \begin{align*}
&\sum\limits_{r \in R} \pi^r \ge \sum\limits_{r \in R} \Dual^r(\vv^*) 
\end{align*}
which is equivalent to  
\begin{align}\label{eq:target}
&\sum\limits_{r \in R} \pi^r  + b
\ge \sum\limits_{r \in R} \Dual^r(\vv^*) + b,
\end{align} 
where $b = \sum\limits_{k \in S}(F_k-
 C_k\lambda_k- M_k\mu_k -  \sum\limits_{r\in R} \eta^r_k)\theta_k + \sum\limits_{e \in E} 
 (W_e- B_e\sigma_e - \sum\limits_{r \in R} \zeta^r_e-\rho) \phi_e + \tau$.
 On the one hand, \eqref{eq:subserver} and~\eqref{eq:sublink} imply that 
\begin{align} \label{eq:arg1} 
&\sum\limits_{r \in R}\Dual^r(\vv^*) + b \ge \sum\limits_{r \in R}\Dual^r(\vv^*) +\sum\limits_{k\in S}  \Dual^k(\dv) +\sum\limits_{e\in E} \Dual^e (\dv) + \tau = l^1. 
\end{align} 
  On the other hand, we note that  $\sum\limits_{r \in R} \pi^r  + b \le f(\vv)$ is exactly the Lagrange function~\eqref{eq:lagrange}. 
  Thus by weak duality it holds that 
    $\sum\limits_{r \in R} \pi^r  + b \le f(\vv)$ for any solution satisfying~\eqref{KP1}--\eqref{QCL}. 
     The proof is then complete.
\end{proof}
\begin{remark}
 The above discussion reveals the fact that Lagrange decomposition procedure can be regarded as a (Lagrange) cut generation procedure which can be called as needed 
in our B\&B algorithm. 
\end{remark}
{
}
\subsubsection{A generalized decomposition hierarchy} \label{sec:hierarchy}
Corollary~\ref{cor:quality} also indicates a hierarchy of request based Lagrange
relaxations. Specifically, we may consider any possible partition of the request set $R$ and apply
the aforementioned decomposition strategy to the partition. 

For ease of presentation, let us assume that we partition set $R$ into 
$m \in \{1, \dots, \abs{R}\}$ disjoint subsets and denote this partition $\mathcal{P}_m = \{I_1, I_2, \dots, I_m\}$, where $I_i \subset R, ~\forall i = 1,
\dots, m$. If $m = \abs{R}$, then we get the single request based decomposition and 
if $m=1$, we end up with the lifted formulation~\eqref{model:reform2}. 
Accordingly each subproblem~\eqref{eq:subr} is defined on subset $I_i$. For
instance constraint $\eqref{eq:ext1}$ becomes 
\begin{align*}
 &\sum\limits_{r \in I_j}\sum\limits_{i\in V_r} c^{ri}x^{ri}_k \le w^{j}_k,\; j\in \{1, \dots, m\}, k\in S.
 \label{eq:extnew}
\end{align*}

This leads to a hierarchy of decompositions and we denote by $l_m$ the resulting lower bound associated with parameter $m$.
\begin{cor} \label{cor:bounds}
Let $m_1, m_2$ be any integer in $\bigcurly{1, \dots, \abs{R}}$. If $m_1 \le m_2$ and for any subset $I \in \mathcal{P}_{m_2}$ there exists a subset $I' \in \mathcal{P}_{m_1}$ such 
that $I \subset I'$, then $l_{m_1} \ge l_{m_2}$. 
\end{cor}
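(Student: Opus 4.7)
The plan is to derive the inequality as a direct consequence of Corollary~\ref{cor:quality}, via a ``refinement inclusion'' between the primal characterizations of the two Lagrange duals. Concretely, I will show that a finer partition can only enlarge the convex set that outer-approximates the feasible region, so minimizing the same objective over this enlarged set can only weaken the bound.

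First, I apply Corollary~\ref{cor:quality} to each partition. For a partition $\mathcal{P}$, let $X^{I}$ denote the feasible set of the subproblem associated with subset $I\in\mathcal{P}$ (the natural generalization of~\eqref{eq:subr}, in which the within-$I$ knapsack and bandwidth aggregation constraints replace the single-request versions~\eqref{eq:ext1}--\eqref{eq:ext3}). Then
\begin{align*}
l_{m} \;=\; \min\bigcurly{\,\text{objective}\,:\, \vv\in \mathcal{S}_{\mathcal{P}_m}}, \qquad \mathcal{S}_{\mathcal{P}_m} \;=\; \conv\Bigl(\bigtimes_{I\in\mathcal{P}_m} X^{I}\Bigr) \,\cap\, \mathcal{C},
\end{align*}
where $\mathcal{C}$ collects the top-level coupling constraints~\eqref{Cut3},~\eqref{eq:extension1}--\eqref{eq:extension5} (these are the same constraints in both decompositions after the natural aggregation of the block variables). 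Because the convex hull of a Cartesian product equals the Cartesian product of convex hulls, I may equivalently write $\conv(\bigtimes_{I} X^{I}) = \bigtimes_{I}\conv(X^{I})$, which is the form I will use.

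Next, I carry out the central containment. Fix $I'\in\mathcal{P}_{m_1}$ and write $I' = \bigsqcup_{j=1}^{t} I_j$ with $I_j\in\mathcal{P}_{m_2}$ (such a partition exists by the refinement hypothesis). I claim that, after introducing the natural identifications of block variables ($w^{I'}_k = \sum_{j} w^{I_j}_k$, and similarly for $z,\kappa$; $\theta^{I'}_k = \max_j \theta^{I_j}_k$ etc., or simply by extending each tuple in $\bigtimes_{j} X^{I_j}$ with the aggregated quantities), the inclusion
\begin{align*}
X^{I'} \;\subseteq\; \bigtimes_{j=1}^{t} X^{I_j}
\end{align*}
holds. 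The reason is that every constraint defining $X^{I_j}$ is obtained from a constraint of $X^{I'}$ by restricting to the requests in $I_j\subseteq I'$, whereas $X^{I'}$ additionally enforces the aggregate resource constraints linking requests in different subsets $I_j\subseteq I'$ (the within-$I'$ coupling that is dualized to the top level in $\mathcal{P}_{m_2}$ but retained inside the subproblem in $\mathcal{P}_{m_1}$). Taking convex hulls preserves this inclusion, and taking the Cartesian product over $I'\in\mathcal{P}_{m_1}$ (noting that $\{I_j\subseteq I' : I'\in \mathcal{P}_{m_1},\, j\}$ partitions $\mathcal{P}_{m_2}$ exactly once) gives
\begin{align*}
\bigtimes_{I'\in\mathcal{P}_{m_1}} \conv(X^{I'}) \;\subseteq\; \bigtimes_{I\in\mathcal{P}_{m_2}} \conv(X^{I}).
\end{align*}

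Finally, intersecting both sides with the common coupling set $\mathcal{C}$ yields $\mathcal{S}_{\mathcal{P}_{m_1}} \subseteq \mathcal{S}_{\mathcal{P}_{m_2}}$ (up to the aggregation projection identifying block variables across the two variable spaces). Since the objective depends only on $(\vx,\vtheta,\vphi)$, which are common to both spaces, minimizing over the smaller set yields a larger value, i.e.\ $l_{m_1}\ge l_{m_2}$. The main obstacle will be the bookkeeping in Step~3: one has to make precise the variable-space correspondence between the block variables $(w^{I'},z^{I'},\kappa^{I'},\theta^{I'},\phi^{I'})$ of the coarser decomposition and the finer block variables $(w^{I_j},z^{I_j},\kappa^{I_j},\theta^{I_j},\phi^{I_j})_{j}$, and to verify that the aggregation indeed realizes the inclusion without losing feasibility in either direction. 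Once this identification is clean, the convex-hull/product identity and Corollary~\ref{cor:quality} close the argument.
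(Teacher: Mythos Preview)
Your proposal is correct and follows essentially the same route as the paper: invoke Corollary~\ref{cor:quality} to replace each Lagrange bound by a minimization over the corresponding outer-approximating set $S^{m}$, then argue that the refinement hypothesis forces $S^{m_1}\subseteq S^{m_2}$, whence $l_{m_1}\ge l_{m_2}$. The paper's proof is a two-line version of exactly this; your write-up supplies the details the paper omits (the product/convex-hull identity, the blockwise inclusion $X^{I'}\subseteq\bigtimes_j X^{I_j}$, and the variable-space bookkeeping), but there is no genuine methodological difference.
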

\begin{proof}
By Corollary~\ref{cor:quality}, the Lagrange decomposition amounts to the following outer-approximation
 \begin{align*}
 S^m = \conv \{\bigtimes_{i \in \bigcurly{1, \dots, m}} X^i\} \bigcap \bigcurly{\vv: \eqref{Cut3},\eqref{eq:extension1},\eqref{eq:extension2}, \eqref{eq:extension3},
 \eqref{eq:extension4},\eqref{eq:extension5} }, 
 \end{align*}
 where $X^i$ represents the feasible region of constraints associated with requests in set $I_i$.
 The condition that for any subset $I \in \mathcal{P}_{m_2}$ there exists a subset $I' \in \mathcal{P}_{m_1}$ such 
that $I \subset I'$ implies that $S^{m_1} \subseteq S^{m_2}$ ending the proof.
\end{proof}
As an illustration, we can partition 6 virtual requests (30 VMs) with $R =\{1, \dots, 6\}$ to $6$ subsets, each has a single request; 
$3$ subsets with $I_1 = \{1, 2\}, I_2 = \{3, 4\}, I_3=\{5, 6\}$.
Let $l_6, l_3$ be 
their respective Lagrange lower bounds associated with the partition-based Lagrange relaxation. Then it follows from Corollary~\ref{cor:bounds} that $l_6 \le l_3$.
To accelerate the solution procedure of subproblems involving multiple requests, one can apply
the aforementioned request-based decomposition approach recursively. 

\subsection{The upper bounds}\label{sec:primal}
It is known that solving the dual problem~\eqref{eq:dual} (or evaluating~\eqref{eq:dualbound}) does not produce any
primal feasible solution. 
Recovering a high quality one often calls for appropriate heuristics. 
In this paper we exploit the information of subproblems~\eqref{eq:subr} and
construct a feasible solution and an upper bound. If the resulting upper bound 
is weak we improve the solution using the~\emph{local branching} technique
introduced in~\cite{fischetti03:branching}.

The overall algorithm is presented in Alg.~\ref{alg:heuristic}. Given an optimal
solution to the Lagrange problem~\eqref{eq:lagrange}, we partition the index set $S$ 
into two disjoint subsets by inspecting values of $\sum\limits_{r \in
R} \overline \theta^r_k$. 
On the one hand, if $\sum\limits_{r \in R} \theta^r_k = 0$, we guess
that server $k$ is unlikely used in a good feasible solution and thus set
$\theta_k = 0$. This eliminates binary $x^{ri}_k~(r \in R~ i \in V_r)$ by location
constraint~\eqref{Location}. Moreover we restrict this unused server $k$ isolated from used ones by imposing
$\phi_e =  0 ~~\forall e\in E:  k \in e$. 
On the other hand, if $\sum\limits_{r \in R} \theta^r_k \ge n \; (n \in \bigcurly{1, \dots, \abs{R}})$ we set $\theta_k=1$; 
to further reduce the number of binary variables, we fix binary variables $\{x^{ri}_k: r \in R, i \in V^r\}$ if they do not violate coupling constraints~\eqref{eq:extension1}--\eqref{eq:extension2}.

As a result we end up with a small MIP which may return a feasible solution. If the MIP is infeasible we switch on the cheapest server among the unused servers and repeat the procedure.  

Let $LB$ be the lower bound obtained by solving~\eqref{eq:lagrange} and $UB,
\tilde \vv = (\tilde \vx, \tilde \vtheta, \tilde \vphi) $ the resulting
upper bound and feasible solution using~Alg.~\ref{alg:heuristic}. If $\textrm{gap} = \frac{UB-LB}{UB}$
is large we mploy the local branching technique~\cite{fischetti03:branching} to
improve it by appending the distance constraint~\eqref{eq:localbranch}
to~\eqref{model:reform} 
\begin{equation} \label{eq:localbranch}
    d(\vv, \tilde\vv) \le \pi.  
\end{equation}
where $d(\vv, \tilde \vv)$ represents the
distance between the optimal solution $\vv$ and the current feasible solution
$\tilde \vv$ and $\pi$ is an integer parameter. Following~\cite{fischetti03:branching}, we set $d(\vv, \tilde\vv) = \sum\limits_{k
\in S: \tilde \theta_k = 1} (1 - \theta_k) +  \sum\limits_{e
\in E: \tilde \phi_e = 1} (1 - \phi_e) + \sum\limits_{r \in R} \sum\limits_{i
\in V^r}\sum\limits_{k \in S: \tilde x^{ri}_k = 1} (1 - x^{ri}_k) $ and choose $\pi \in \{10, \dots, 20\} $.
We then solve the resulting augmented problem and try to get a better solution.

\begin{algorithm}[h!]
\caption{Repairing heuristic}
 \label{alg:heuristic}
 \SetAlgoLined
\SetKwInOut{Input}{Input}
\SetKwInOut{Output}{Output}
    \Input{Solution $\bar \vv$ to
    problem~\eqref{eq:lagrange}; an integer $n\in \{1,\abs{R}$\}}
    \Output{An feasible solution to problem~\eqref{model:reform} and upper bound
    $UB$.}
    Let $\Theta_1 = \{k \in S: \sum_{r \in R} \overline \theta_k^r  \ge n \}$ and
    $\Theta_0 = \{k \in S: \sum_{r \in R}  \overline \theta_k^r = 0\}$.\\
    \While{$\abs{\Theta_0} \ge 0$}{
    Let $\theta_k = 0~\forall k \in \Theta_0$. \\
   For each $k \in \Theta_0$, impose $\phi_e = 0, ~\forall e\in E: k \in e$. \\
    \ForEach{$k \in \Theta_1$} {
        \If{$C_k - \sum\limits_{r \in R} w^r_k \ge  0$ and $M_k - \sum\limits_{r \in R} z^r_k \ge 0$}
        { Let $\theta_k = 1, ~x^{ri}_k = \overline x^{ri}_k \; (\forall r \in R \;  i \in V^r)$.
       }
    }
    Solve model~\eqref{model:reform} with partially fixed $\vtheta, \vx$ and constraints~\eqref{eq:strongerCut3}. \\
   \eIf{infeasible}{Find $k = \argmin\{F_k: k \in \Theta_0\}$.  \\
    Let $\Theta_0 = \Theta_0 \setminus \{k\}$.
    }
    {break the loop.}
}
\If{$\textrm{UB-LB}\ge \epsilon\textrm{UB}$}{
Append~\eqref{eq:localbranch} to~\eqref{model:reform}, which is solved to update the feasible solution.
}
\end{algorithm}
\begin{remark}
To accelerate the procedure of Alg.~\ref{alg:heuristic} we restrict the MIP solution procedure within $3 \times \abs{R}$ seconds. Of-course we do not pretend that 
it guarantees a high quality feasible solution. For this reason, Alg.~\ref{alg:heuristic} is called several times in our B\&B algorithm. 

\end{remark}

\subsection{The algorithm}
In this section, we elaborate details of the overall branch-and-bound algorithm. As highlighted before,  the algorithm uses Lagrange decomposition stated in
Sec.~\ref{sec:decompose} and Alg.~\ref{alg:heuristic} to generate strong lower
and upper bounds. In addition it distinguishes from the standard B\&B algorithm 
in terms of branching rules and dynamic cut generation.

\subsubsection{Branching} \label{sec:branching}
The B\&B bound algorithm focus on branching over $\vtheta$ and $\vphi$ variables.  If
a node is not fathomed when all $\vtheta, \vphi$ are integral we export the arising
subproblem to standard MIP solvers. This is due to the following reasons.  First, cost coefficients of $\vtheta$ and $\vphi$ are usually much larger than those of $\vx$. 
And as will be explained in Section~\ref{sec:cuts} a number of strong valid inequalities can be generated on the fly by fixing values of $\vtheta$ or $\vphi$.  
Thus one can usually fathom a node by just branching
over $\vtheta$ and $\vphi$. Second, there might be many symmetries among variables $\vx$, where symmetry breaking techniques should be investigated.
This however is out of the scope of this paper. Instead we exploit relevant features of existing MIP solvers. 

For $\vtheta, \vphi$, we select variables that are most inconsistent with
respect to linking constraints~\eqref{eq:extension4}
and~\eqref{eq:extension5} as it is more likely to fathom a node.
Let us illustrate this point for variables $\vtheta$ and denote by $\bar \theta_k^r, \bar \theta_k$ the respective optimal values of~\eqref{eq:subr} and~\eqref{eq:subserver} for each $k\in S$. 
If $\sum\limits_{r \in R}\bar \theta^r_k = 0$, we will not branch it as the constraint 
$\theta^r_k \le \theta_k$ always holds. Otherwise if $\sum\limits_{r \in R}
\bar \theta^r_k \ge 1$ and $\bar \theta_k = 0$, we branch on variable $\theta^k$ and create two nodes with
$\theta_k = 0$ or $\theta_k = 1$. Then one can try to improve lower bounds of both nodes by solving 
a continuous relaxation or a Lagrange decomposition procedure. 
Note that the Lagrange bounds of these two child nodes can be improved even~\emph{without} updating the Lagrange multipliers.  
For the child node with $\theta_k = 0$, we can improve the Lagrange lower bound 
by simply evaluating~\eqref{eq:subserver} indexed by $k$ and~\eqref{eq:subr} where $\theta^r_k$ was determined as $1$ at its parent node;
 while for the child node with~$\theta_k = 1$, we can improve the lower bound by evaluating~\eqref{eq:subserver} indexed by $k$.
When all $\vtheta$ are
consistent with $\vtheta^r$, we branch over the most fractional component. 
\begin{remark}
Our numerical experiments show that without updating the Lagrange multipliers, the improvement of lower bound is significant for the child node with $\theta_k = 0$ 
but quite small for the node with $\theta_k=1$.  Thus for the latter case, we may still update Lagrange multipliers (by solving the continuous relaxation problem~\eqref{model:reform2}).
\end{remark}
\subsubsection{Bounding }
As highlighted, variables $\vtheta$ and $\vphi$ have dominating
coefficients in the objective function. Thus bounding these variables might be
beneficial in the procedure of B\&B algorithm.   

Let $\F_{\eqref{model:reform2}}$ be the continuous relaxation of the feasible
region of~\eqref{model:reform2}. We evaluate bounds of $\sum\limits_{k
\in S} \theta_k, \sum\limits_{e \in E} \phi_e$ by solving linear programs over $
\F_{\eqref{model:reform2}}$ augmented by Lagrange cuts and the upper bounding
constraint $f(\vv) \le UB$. For instance the upper bound of  $\sum\limits_{k
\in S} \theta_k$ is obtained by solving the following linear problem
\begin{align}\label{eq:bounding}
p = \max\bigcurly{\sum\limits_{k\in S} \theta_k: \subto \; \vv \in \F_\eqref{model:reform2}, f(\vv) \le UB, \eqref{eq:lagrangecut}}
\end{align}
and then taking $u_\theta = \floor{p}$. Similarly we can get the upper bound of $\sum\limits_{e \in E} \phi_e$ which we denote by $u_\phi$.
Moreover one can check the connectivity of used servers using the following proposition. 
\begin{prop}
Given that all request graphs $G^r \; (r \in R)$ are connected, all used servers are connected if either of the following inequality holds
\begin{subequations}\label{eq:boundchecking}
\begin{align}
u_\theta \le \min\limits_{r^1, r^2 \in R}\bigcurly{\abs{V}^{r_1} + \abs{V}^{r_2}}-1, \\
u_\phi \le \min\limits_{r^1, r^2 \in R}\bigcurly{\abs{V}^{r_1} + \abs{V}^{r_2}}-3. \; 
\end{align}
\end{subequations}
\end{prop}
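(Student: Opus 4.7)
The plan is to reduce the claim ``all used servers are connected'' to a pairwise node-overlap condition among the physical subgraphs induced by individual requests, and then show that each of the two inequalities forces this overlap condition by a short counting argument---vertex counting for the first, edge counting for the second.

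For the setup, for each request $r \in R$ I would let $S^r \subseteq S$ denote the set of servers hosting a VM of $r$ (so $\abs{S^r} = \abs{V^r}$ by~\eqref{Assign} and~\eqref{Location}), and let $H^r$ be the subgraph of the physical network consisting of all nodes and edges lying on the shortest paths $P_{kp}$ used to route the virtual links of $r$. Since $G^r$ is assumed connected, each $H^r$ is a connected subgraph of $(S, E_{\mathrm{used}})$ containing $S^r$, where $E_{\mathrm{used}} = \{e : \phi_e = 1\} = \bigcup_{r\in R} E(H^r)$. Thus if every pair $H^{r_1}, H^{r_2}$ shares at least one vertex, the family $\{H^r\}_{r\in R}$ has connected union inside $(S, E_{\mathrm{used}})$, and every used server lies in a single connected component. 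The task therefore reduces to establishing pairwise vertex-overlap of the $H^r$'s.

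For the first inequality of~\eqref{eq:boundchecking}, I would apply inclusion-exclusion inside $\{k : \theta_k = 1\}$: if $S^{r_1}$ and $S^{r_2}$ were disjoint, then $\abs{V^{r_1}} + \abs{V^{r_2}} = \abs{S^{r_1} \cup S^{r_2}} \le \sum_{k\in S} \theta_k \le u_\theta$, contradicting $u_\theta \le \abs{V^{r_1}} + \abs{V^{r_2}} - 1$, so $S^{r_1} \cap S^{r_2} \ne \emptyset$ and $H^{r_1}, H^{r_2}$ share a used server. For the second inequality, I would observe that if $H^{r_1}$ and $H^{r_2}$ were vertex-disjoint they would also be edge-disjoint, and since each $H^r$ is connected on at least $\abs{V^r}$ vertices it carries at least $\abs{V^r} - 1$ edges; summing yields $u_\phi \ge \sum_{e\in E} \phi_e \ge \abs{V^{r_1}} + \abs{V^{r_2}} - 2$, contradicting $u_\phi \le \abs{V^{r_1}} + \abs{V^{r_2}} - 3$.

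The main subtlety, and the only place the argument is not entirely mechanical, lies in the reduction step: the $H^r$'s may contain intermediate servers that are not themselves used by any request, so the common vertex witnessing overlap of $H^{r_1}$ and $H^{r_2}$ in the edge-counting case need not be a used server. This is harmless, because any shared vertex---used or intermediate---still merges the two connected subgraphs inside $(S, E_{\mathrm{used}})$ and keeps the used servers of $r_1$ and $r_2$ in the same connected component; I would spell this out briefly when wrapping up the reduction.
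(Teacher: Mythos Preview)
Your proof is correct and follows essentially the same strategy as the paper's: both reduce the claim to pairwise overlap between requests and handle the two inequalities by a vertex count and an edge count, respectively. The only cosmetic difference is that the paper phrases the edge-counting argument through the per-request auxiliary variables $\theta^r_k,\phi^r_e$ from the Lagrange decomposition (invoking~\eqref{eq:validsubr} and~\eqref{eq:Cut3r}), whereas you argue directly with the physical subgraphs $H^r$; your presentation is slightly more self-contained and also makes explicit the reduction step (pairwise vertex overlap $\Rightarrow$ connected union) that the paper leaves implicit.
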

\begin{proof}
The first inequality implies that any pair of two virtual requests share at least one server. 
The second one indicates that that at least one link is shared by any pair of two requests.
Indeed suppose that there exist a pair of two requests $r_1, r_2 \in R$ using two disjoint sets of links.  
Then we have that 
\begin{align*}
u_\phi  \ge \sum\limits_{e \in E} \phi^{r_1}_e + \phi^{r_2}_e 
\end{align*}
On the other hand~\eqref{eq:Cut3r} implies that 
\begin{align*}
\sum\limits_{e \in E} (\phi^{r_1}_e + \phi^{r_2}_e )  \ge \sum\limits_{k \in S} (\theta^{r_1}_k + \theta^{r_2}_k)-2 
\end{align*}
And constraints~\eqref{eq:validsubr} and~\eqref{Assign} imply that 
\begin{align*}
\sum\limits_{k \in S} (\theta^{r_1}_k + \theta^{r_2}_k) = \sum\limits_{k \in S} \left(\sum\limits_{i \in V^{r_1}} x^{r_1 i}_k +\sum\limits_{i \in V^{r_2}} x^{r_2 i}_k\right) = \abs{V}^{r_1} + \abs{V}^{r_2}
\end{align*}
These lead to that 
\begin{align*}
u_\phi  \ge \bigcurly{\abs{V}^{r_1} + \abs{V}^{r_2}}  - 2
\end{align*}
which contradicts the inequality $u_\phi \le \min\limits_{r^1, r^2 \in R}\bigcurly{\abs{V}^{r_1} + \abs{V}^{r_2}}-3$. 
\end{proof}
Note that the fact that the graph induced by used servers is connected will be exploited in the next section to derive some strong cuts. 
\subsection{Cuts}\label{sec:cuts}
In our B\&B algorithm, two sets of cuts are added to the continuous relaxation of~\eqref{model:reform2} at each child node dynamically, namely, the Lagrange cuts~\eqref{eq:lagrangecut} 
and~\emph{connectivity cuts}. The former has been elaborated in Subsection~\ref{sec:lagrangecuts} and we now introduce the latter.

For ease of presentation, we denote by $H'$ the subgraph of $H$ induced by all used servers and used links. If $H'$ is connected, then 
the following inequalities are valid 
\begin{subequations}
\begin{align}
 \sum\limits_{e \in E} \phi_e &\ge \sum\limits_{k \in S} \theta_k -1,  \label{eq:strongerCut3}\\
 \theta_k & \le \sum\limits_{e \in E: k \in e} \phi_e,  \; k\in S,\label{eq:strongerCut3_2}
\end{align}
\end{subequations}
where the first one stipulates that at least $\sum\limits_{k \in S} \theta_k -1$ links are used and the second one 
comes from the definition of connectivity. 
It is straightforward to see that~\eqref{eq:strongerCut3} dominates~\eqref{Cut3}. Let us call~\eqref{eq:strongerCut3} 
and~\eqref{eq:strongerCut3_2} connectivity cuts. 

The overall B\&B procedure is presented in Algorithm~\ref{alg:global}.

{\scriptsize
\begin{algorithm}
\caption{{The B\&B algorithm}}
 \label{alg:global}
 \SetAlgoLined
\SetKwInOut{Input}{input}
\SetKwInOut{Output}{output}
\Input{an instance data, the optimality tolerance $\epsilon$, upper bound tolerance $\delta$}
\Output{An optimal solution and the optimal objective value}
    {\bf Step 1: Initialization}\\
  At root node solve the continuous relaxation of the reformulation~\eqref{model:reform2}. \\ 
  Extract the dual multipliers with respect to constraints~\eqref{eq:extension1}-\eqref{eq:extension5}. \\
   For each $r \in R$~ evaluate~\eqref{eq:subr}; evaluate\eqref{eq:subserver}, \eqref{eq:sublink}. \\
  Initialize lower bound $LB$ with~\eqref{eq:dualbound} and store $\abs{R}$ Lagrange cuts~\eqref{eq:lagrangecut}.  \\
Initialize $UB$  using  Alg.~\ref{alg:heuristic} and go to step 4. \\
{\bf Step 2: Evaluation}\\
\eIf{the branched variable (e.g.~$\theta_k$) is fixed to be $0$ and $\sum\limits_{r \in R} \bar \theta^r_k >0$}{
For each $r \in R $ such that $\theta_k^r = 1$~ evaluate~\eqref{eq:subr}; evaluate \eqref{eq:subserver} and~\eqref{eq:sublink}.
}{
Solve the continuous relaxation of the reformulationcenhanced by~\eqref{eq:lagrangecut}\\ 
  Extract the dual multipliers with respect to constraints~\eqref{eq:extension1}-\eqref{eq:extension5}. \\
   For each $r \in R$~ evaluate~\eqref{eq:subr}; evaluate\eqref{eq:subserver},~\eqref{eq:sublink}. \\
Update lower bound $LB$ with~\eqref{eq:dualbound} and store $\abs{R}$ Lagrange cuts~\eqref{eq:lagrangecut}.}
If {$UB - LB \ge \delta UB$,  then update $UB$  using  Alg.~\ref{alg:heuristic}.} Go to Step 3.\\
{\bf Step 3: Termination} \\
A node is fathomed if one of the following conditions is met: 
\begin{enumerate}
    \item  $UB - LB \le \epsilon UB$.
    \item  A feasible solution is found. 
    \item  Any subproblem~\eqref{eq:subr} is infeasible. 
\end{enumerate}
Otherwise, go to Step 4.\\
{\bf Step 4: Bounding} \\
Update the upper bounds of $\sum\limits_{k \in K} \theta_k, \sum\limits_{e \in E} \phi_e$ with~\eqref{eq:bounding}. 
\If{\eqref{eq:boundchecking} holds}{
{Replace~\eqref{Cut3} with~\eqref{eq:strongerCut3}-\eqref{eq:strongerCut3_2}}\\
}

Go to step 5.\\
{\bf Step 5: Branching} \\
\eIf{there exists a fractional component in  $(\vtheta, \vphi)$}{
Select one variable from $\vtheta, \vphi$ to according to rules in Sec.~\ref{sec:branching}\\
Create two child nodes by fixed the selected variable to $1$ and $0$ respectively. Go to step 2.\\
}
{Export the problem to a MIP solver.\\
 If the objective value is less than $UB$, update $UB$ and go to Step 3.}
\end{algorithm}
}
\section{Numerical experiments}\label{sec:numerical}
In this section, we assess the computational performance of the proposed formulation~\eqref{model:reform}, Lagrange decomposition procedure and Algorithm~\ref{alg:global}.
Results in this section illustrate the following key points.
\begin{enumerate}
\item The proposed reformulation~\eqref{model:reform} is effective.~Numerically it outperforms the McCormick formulation~\eqref{model:MC} by orders of magnitudes. 
\item The proposed Lagrange decomposition provides stronger lower bounds than the continuous relaxation bound of~\eqref{model:reform}.
\item Algorithm~\ref{alg:global} outperforms the standard B\&B algorithm of CPLEX 12.7 solver with formulation~\eqref{model:reform} by orders of magnitudes. 
\end{enumerate}

\subsection{Test instances}
To the best of our knowledge, there is no public data sets for the  virtual machine mapping problem. 
Following~\cite{mechtri2014:VM}, we randomly generate virtual request instances following rules below. 
\begin{enumerate}
\item As presented in Section~\ref{sec:background}, each virtual request
 typically involves a small number of VMs, so the size of each
 virtual request is fixed to 5. The number of virtual requests ranges from 1 to 10. In other words, we deal with problem instances up to 50 VMs.
\item For each virtual request, the communication traffic between each two VMs is uniformly generated as an integer in $\{0, 100\}$.
 For each virtual machine, the required number of CPU cores is randomly generated from 1 to 10 and the size of memory is generated from 2GB to 8GB. 
\end{enumerate}
Physical graph instances are from SND library~\cite{SNDlib}. Each node of a graph represents a server, whose number of CPU cores and memory size are
randomly chosen as an ordered pair from set $\{(8, 128), (16, 256), (32, 512),
(64, 1024)\}$. Parameters of link capacity and fixed cost are consistent with the data in SND library. 
The additional cost regarding CPU resource $A_k$ is set as $10$ for all $k \in S$.
For each O-D pair of servers, the shortest path is computed using Dijkstra's algorithm before the solution procedure. 

To measure the computational difficulty of each problem instance, we report the topology of the physical graph, the number of VMs and 
the number of binary variables. 
\subsection{Implementation and experiments setup}
All computations are implemented with C++ and all problem instances are solved by the state-of-the-art solver CPLEX 12.7 with default settings 
on a Dell Latitude E7470 laptop with Intel Core(TM) i5-6300U CPU clocked at 2.40 GHz and with 8 GB of RAM. The B\&B tree is implemented using a 
priority queue.  To have a fair and unbiased comparison, CPU time is used as computational measurement. For all the tests, the computational time is limited to 10 hours and the feasibility tolerance is set as $10^{-4}$.  The optimality tolerance for Algorithm~\ref{alg:global} is set to $0.5\%$. To improve the stability of computation, Lagrange multipliers are preserved to 4 decimal places. If no solution is available at solver termination or the solution process is killed by the solver, ($-$) is reported.
\subsection{The evaluation of formulation~\eqref{model:reform}}
We evaluate the strength of formulations~\eqref{model:RLT},~\eqref{model:reform}
with respect to the McCormick based formulation~\eqref{model:MC}. For each problem instance let $v_{MC}$ denote the optimal value of 
the continuous relaxation of~\eqref{model:MC} and $v^*$ the global optimal value. Thus the optimality gap induced by McCormick formulation 
is $\frac{v^* - v_{MC}}{v^*}$.  Formulations~\eqref{model:RLT},~\eqref{model:reform} are expected to close this optimality as more valid inequalities are added. 
We measure the closed gap via
\begin{align}\label{eq:closedgap}
 \mathrm{Closed~gap} = \frac{v-v_{MC}}{v^* - v_{MC}} \times 100
\end{align}
where $v$ represents the continuous relaxation value of~\eqref{model:RLT} or~\eqref{model:reform}. Results are summarized in Table~\ref{tab:prlt} which imply the following.
\begin{enumerate}[label=(\roman*)]
 \item The RLT inequalities close $2\%-15\%$ of the optimality gap while the combination of the RLT and~\eqref{Cut1}-\eqref{Cut3} closes $48\%-100\%$ gap.
  \item As might be expected the continuous relaxations of RLT formulation and the compact formulation~\eqref{model:reform} are computationally expensive due to the addition of valid inequalities. In particular the solution time for the continuous relaxation of~\eqref{model:reform} is becoming evidently more expensive than that of~\eqref{model:RLT} as the size of the physical network increases. 
 \end{enumerate}
\begin{table}[h!]
 \caption{Numerical evaluation for~\eqref{model:RLT},~\eqref{model:reform}}
 \label{tab:prlt}
\begin{center}
 \resizebox{\columnwidth}{!}{
\begin{tabular}{l*{9}{c}}
\toprule
&&\multicolumn{7}{c}{Continuous relaxation statistics}\\
\midrule
 ($|S|,|E|$)&\#VMs. &\multicolumn{1}{c}{\eqref{model:MC}}&&\multicolumn{2}{c}{\eqref{model:RLT}}&&\multicolumn{2}{c}{\eqref{model:reform}}\\
 \cmidrule{3-4}\cmidrule{5-6}\cmidrule{8-9}
 &&\#Cpu(Sec.)&\quad&\#CPU(Sec.)& closed gap(\%) & \quad &\#CPU(Sec.) & closed gap(\%)\\
\midrule
 (8, 10)     & 5   & 0.00  &  & 0.01  & 15.2 &  & 0.12  & \bf{84.80} \\
\smallskip
(8, 10)      & 10  & 0.02  &  & 0.01  & 11.8 &  & 0.18  & \bf{89.21} \\
\smallskip
(12, 15)     & 5   & 0.02  &  & 0.01  & 7.29 &  & 0.22  & \bf{92.28} \\
\smallskip
(12, 15)     & 10  & 0.07  &  & 0.02  & 2.37 &  & 0.43  & \bf{82.51} \\
\smallskip
(12, 15)     & 15  & 0.13  &  & 0.22  & 3.34 &  & 1.02  & \bf{95.12} \\
\smallskip
(12, 15)      & 20  & 0.27  &  & 0.72  & 3.33 &  & 1.49  & \bf{100.00} \\
\smallskip
(12, 15)      & 25  & 0.33  &  & 0.92  & 6.53 &  & 2.69 & \bf{88.11} \\
\smallskip
(12, 15)      & 30  & 0.34  &  & 1.2   & 4.89 &  & 3.18 & \bf{80.30} \\
\smallskip
(12, 15)      & 35  & 0.49  &  & 1.33   & 8.32 &  & 3.64 & \bf{94.30} \\
\smallskip
(12, 15)      & 40  & 0.67  &  & 1.42  & 7.31 &  & 3.75  & \bf{86.83} \\
\smallskip
(12, 15)      & 50  & 0.92  &  & 1.43  & 9.98 &  & 4.23  & \bf{89.83} \\
\smallskip
(15,  22)     & 10  & 0.07  &  & 0.27  & 2.33 &  & 0.22  & \bf{89.78} \\
 \smallskip
 (15,  22)     & 15  & 0.13  &  & 0.48 & 7.03 &  & 2.34  & \bf{72.56} \\
 \smallskip
(15,  22)     & 20  & 0.26  &  & 3.77  & 4.09 &  & 4.22 & \bf{86.04} \\
\smallskip
(15,  22)     & 25  & 0.85  &  & 2.75  & 4.89 &  & 5.09 & \bf{87.77} \\
\smallskip
(15, 22)     & 30  & 1.04  &  & 1.98  & 5.98&  & 4.96 & \bf{90.14} \\
 \smallskip
 (15, 22)     & 35  & 0.63  &  & 0.72  & 14.53&  & 4.57 & \bf{76.51} \\
 \smallskip
(15, 22)     & 40  & 1.25  &  &  3.44 &12.42 &  & 5.83 & \bf{79.43} \\
\smallskip
    (22, 36) & 10  & 0.92  &  & 1.46  &5.92 &  & 4.59  & \bf{100.00}\\
\smallskip
    (22, 36) & 15  & 3.01  & &3.04 & 2.51 && 16.20 & \bf{69.80}\\
\smallskip
    (22, 36) & 20  & 3.39  &  & 3.55  & 3.58 &  & 26.87 & \bf{67.75} \\
\smallskip
    (22, 36) & 25  & 6.22  &  & 4.12  & 2.90&  & 28.10 & \bf{63.64} \\
\smallskip
    (22, 36) & 30  & 8.83 &   & 3.23 &  3.91 & &29.91&  {\bf 69.12} \\
\smallskip
    (22, 36) & 35  & 8.91  &  & 3.34 &3.34 && 33.81  & \bf{53.73}\\ 
\bottomrule
\end{tabular}
}
\end{center}
\end{table}

\subsection{The Lagrange lower bounds}
Section~\ref{sec:lb}} shows that the proposed Lagrange decomposition scheme~\eqref{eq:dual} and its generalized hierarchy can close further the McCormick relaxation gap. 
We illustrate this point numerically with following settings.
 \begin{enumerate}
 \item For a given problem instance, we first partition the virtual request set $R$ to a number of subsets. Each subset has 
 at most $\delta$ requests. For instance if $\abs{R} = 7$ and $\delta = 2$, set $R$ is partition to 4 subsets 
 $\bigcurly{\{1, 2\}, \{3, 4\}, \{5, 6\}, \{7\}}$.
 \item We perform numerical experiments with $\delta \in \{1, 2, 3\}$ and solve each subproblem with CPLEX 12.7.
 \item For instances with network $(22, 36)$ it is time consuming for solving problems associated with $\delta =3$. For this reason, we skip them. 
 \end{enumerate}
The corresponding results are summarized in Table~\ref{tab:lowerbounds} and they may indicate the following.
\begin{enumerate}
 \item The Lagrange lower bound is generally stronger than the continuous relaxation bound of~\eqref{model:reform}; in particular,
it can close the optimality gap completely for certain instances in a reasonable time. For most problem instances, the proposed Lagrange lower bounding procedure closes 80\% McCormick optimality gap. 
 \item For a given instance, its Lagrange lower bound generally increases as $\delta$ increases. Correspondingly the computational time increases. 
\item There exist a couple of instances where the Lagrange lower bound with $\delta =1 $ is equal to the continuous relaxations bound (e.g.~network (15, 22) with 30VMs). For such instances, the computational cost of Lagrange lower bounding procedure is usually quit small. This is probably due to the fact that the formulation of each subproblem~\eqref{eq:subr} is strong.
\end{enumerate}
\begin{table}[h!]
 \caption{Numerical evaluation of the Lagrange lower bounds}
 \label{tab:lowerbounds}
\begin{center}
 \resizebox{\columnwidth}{!}{
\begin{tabular}{l*{9}{c}}
\toprule
($|S|,|E|$)&\#VMs. &\multicolumn{2}{c}{$\delta =1$}&&\multicolumn{2}{c}{$\delta =2$}& &\multicolumn{2}{c}{$\delta =3$} \\
 \cmidrule{3-4}\cmidrule{6-7}\cmidrule{9-10}
 \smallskip
&&\#CPU(s)&Closed gap(\%)&\quad&\#CPU(s)& Closed gap(\%)&\quad&\#CPU(s)& Closed
 gap(\%) \\
\midrule
\smallskip
 (8, 10)  & 10  & 0.91 & \bf{100.00} & & - &-  &&-&- \\
\smallskip      
 (12,15) & 10  & 8.34 & \bf{100.00} & & -  &-   &&-& -\\
\smallskip            
 (12,15) & 15  & 5.23 & \bf{100.00} & & -  &-   &&-&- \\
\smallskip
 (12,15) & 20  & 6.87 & \bf{100.00} & & -& - &&-&- \\
\smallskip            
 (12,15) & 25  & 15.78 & \bf{89.68} & & 16.31 & \bf{91.98} &&50.23& {\bf 92.12} \\
\smallskip            
 (12,15) & 30  & 12.31 & \bf{88.78} & & 25.20& \bf{88.87} &&68.38&\bf{89.34} \\
 \smallskip            
 (12,15) & 35  &12.56 &\bf{94.67}  & & 28.93 &\bf{95.03}  && 35.45&\bf{95.03} \\
\smallskip            
 (12,15) & 40  &27.56 &\bf{82.34}  & & 38.03 &\bf{83.56}  && 63.34&\bf{83.98} \\
 \smallskip            
 (12,15) & 50  &37.22 &\bf{90.12}  & & 43.92 &\bf{90.12}  && 93.40&\bf{90.24} \\
\smallskip                 
 (15, 22) & 10 &7.64 & {\bf 93.55}& &30.10& \bf{100.00} & &- &-  \\
\smallskip
(15, 22) & 15& 6.21&{\bf 76.37} & &13.34 &{\bf 76.67} & & -&-  \\
\smallskip
 (15, 22) & 20 &13.22 &{\bf 89.56} & &28.90&{\bf 90.43} &  & 90.31&\bf{92.77}  \\
\smallskip
 (15, 22) & 25 & 17.88&{\bf 91.40}& & 33.04&{\bf 91.81} &&50.23  &{\bf 96.36}  \\
\smallskip
 (15, 22) & 30 &4.11 &{\bf 90.14} & & 4.18& {\bf 90.14}& & 33.21&{\bf 93.13}  \\
 \smallskip
 (15, 22) & 35 &6.09 &{\bf 79.01} & &23.94& {\bf 80.32}& & 24.82&{\bf 80.33}  \\
 \smallskip
 (15, 22) & 40 &23.34 &{\bf 81.34} & & 24.43&{\bf 81.34} & &50.32 &{\bf 82.10}  \\
  \smallskip
 (22, 36) & 10 & 4.31&{\bf 100.00} & &- &-& &- &-  \\
\smallskip
(22, 36) & 15 & 62.31& {\bf 75.12}&  &72.12 & {\bf 81.60}&&- &-  \\
\smallskip
 (22, 36) & 20 &90.31 &{\bf 74.40} & & 139.76&{\bf 75.45}& &- &-  \\
\smallskip
 (22, 36) & 25 &95&{\bf 81.36} & & 139.12&{\bf 83.74}& & -&  -\\
\smallskip
 (22, 36) & 30 & 150&{\bf 70.12} & & 223&{\bf 73.20} & & -&-  \\ 
 \smallskip
  (22, 36) & 35 & 158.18&{\bf 60.56} && 413.93& {\bf 64.32}& & -& -  \\ 
\bottomrule
\end{tabular}
}
\end{center}
\end{table}

\subsection{The evaluation of Algorithm~\ref{alg:global}}
In this section, we evaluate the performance of Algorithm~\ref{alg:global} in
comparison with model~\eqref{model:MC} and model~\eqref{model:reform} in terms of solution time and B\&B nodes. For this purpose we setup numerical experiments as follows.
\begin{enumerate}
\item For each problem instance we try to solve it to global optimality within a time limit of 10 hours with formulation~\eqref{model:MC} and~\eqref{model:reform}. 
Both CPU time and the number of B\&B nodes of CPLEX 12.7 are recorded. We also implement branching priority strategy over $\vtheta, \vphi$ using a 
\texttt{BranchCallback} for~\eqref{model:reform}.
\item For each problem instance we solve it to global optimality with Algorithm~\ref{alg:global}. 
 Lagrange cuts are generated using the single request based decomposition~\eqref{eq:dual}.
The upper bound tolerance parameter $\delta$ is set as $5\%$ and the optimality tolerance is $0.5\%$. 
\item The heuristic algorithm~\ref{alg:heuristic} is used in Algorithm~\ref{alg:global} for the generation of upper bounds. Its input parameter $n$ is set to  $\floor{R/2}$.
\item As highlighted before our B\&B algorithm uses a heuristic for generating upper bounds whose quality might influence the overall computational time. 
In order to facilitate a fair comparison we solve each problem instance $10$ times and take the respective averages of CPU time and B\&B nodes as measurement.  
\end{enumerate}

Numerical results are summarized in Table~\ref{tab:decomposition} and we make some comments below.
\begin{enumerate}
\item For problem instances with 10 VMs, all three approaches can solve the problem to global optimality.
The compact formulation~\eqref{model:reform} performs the best in terms of computational efficiency.  

\item For problem instances with more than $30$ requests, the McCormick formulation~\eqref{model:MC} is the most
time-consuming approach while Algorithm~\ref{alg:global} is computationally most efficient. For some small instances (e.g. (12, 15) 20 VMs), formulation~\eqref{model:reform} 
performs better than Algorithm~\ref{alg:global}.~This is probably due to the fact that CPLEX sometimes finds better upper bound than Algorithm~\ref{alg:heuristic}.
 \item For problem instances having more than $40$ VMs, CPLEX cannot solve the
problem to optimality within the $10$-hour time limit even using the compact model~\eqref{model:reform}.
In contrast Algorithm~\ref{alg:global} provides optimal solutions much faster.  
For most problem instances, Algorithm~\ref{alg:global} is at least 10 times and sometimes 30 times more efficient than the CPLEX default 
branch and bound algorithm with Model~\eqref{model:reform}. 

\item Given a set of VMs, formulation~\eqref{model:reform} and Algorithm~\ref{alg:global} become less advantageous in terms of computational efficiency as the size of the physical network increases. This is largely due to two reasons. First our subproblem~\eqref{eq:subr} is not separable in terms of physical network components thus making the lower bounding procedure computationally more expensive. Second Algorithm~\ref{alg:heuristic} becomes less effective in finding good upper bounds leading to more branch nodes. 
\item The current implementation of Algorithm~\ref{alg:global} fails to solve problems instances with more than $700$ variables due to memory issues. 
Numerically we observed that the implementation incurs memory issues when the number of branch nodes is over 1000. 
\end{enumerate}
\begin{table}[!h]
 \caption{The evaluation of Algorithm~\ref{alg:global}}
 \label{tab:decomposition}
\begin{center}
\begin{tabular}{lcc*{9}{r}}
\toprule
 ($|S|,|E|$)&\#VMs. &\#Bin. &\multicolumn{2}{c}{\eqref{model:MC}} &&\multicolumn{2}{c}{\eqref{model:reform}} & ~&\multicolumn{2}{c}{ Alg.~\ref{alg:global}} \\
  \cmidrule{4-5}\cmidrule{7-8}\cmidrule{10-11} 
  &&& CPU & Nodes & & CPU &Nodes & & CPU &Nodes \\
\midrule
\smallskip        
(8, 10)    & 5  & 68  & 0.86           & 0     &  & {\bf 0.29}  & 0   &  & 0.58 & 0\\
\smallskip
(8, 10)    & 10 & 108 & 29.1           & 229   &  & 1.46        & 3   &  & {\bf 0.91}& 0 \\
\smallskip
(12, 15)   & 5  & 102 & 1.63           & 16    &  & {\bf 0.95}  & 0  &  & { 1.21} & 0\\
\smallskip
(12, 15)   & 10 & 162 & 123.31         & 8897  &  & {\bf 4.31}     & 0  &  &  8.87 & 12 \\
\smallskip
(12, 15)   & 15 & 222 & 321.32         & 9873  &  & 56.27       & 56  &  & {\bf 22.53} &22 \\
\smallskip
(12, 15)   & 20 & 282 & 1035.32        & 10766 &  &  {\bf 1.85}     & 0 &  &  3.21 &2\\
\smallskip
(12, 15)   & 25 & 342 & 32492.00  & 10645 &  & 270.32     & 240 &  & {\bf 20.54}& 6 \\
\smallskip
(12, 15)   & 30 & 402 & -              & -     &  & 358.89   &197 & & {\bf 59.50}&32 \\
\smallskip
(12, 15)   & 35 & 447 & -              &    -   &  & 7684.81        & 6534    &  & {\bf 169.00} & 29\\
\smallskip
(12, 15)   & 40 & 507 & -              &     -  &  & -           & -   &  & {\bf 534.22} & 138\\
\smallskip
(12, 15)   & 50 & 627 & -              &      - &  & -           &  -   &  & {\bf 1340.18} & 82\\
\smallskip
(15, 22)   & 10 &  187   &         3781         &    2383  && {\bf 21.39}&    20         &     & 24.01 & 3\\
\smallskip
(15, 22)   & 15 & 262    &         -        &   -    &&  2366.72  &    6414       &     &{\bf 123.01}  & 12 \\
\smallskip
(15, 22)   & 20 & 337  &       -         &   -    &  &   1702.9 &2264              &  & {\bf 239.22} & 49\\
\smallskip
(15, 22)   & 25 & 412 &          -      &   -    &     &11400.8&12011             && {\bf 304.44} & 46\\
\smallskip
(15, 22)   & 30 &  487   &         -       & -      & &  12103.3        &2750&    &{\bf 406.92} &24 \\
\smallskip
(15, 22)   & 35 & 562    &        -        &     -  &  &     -        & -    && {\bf 731.12} &134 \\
\smallskip
(15, 22)   & 40 & 637    & -              &   -    &  & -           & -    &  &{\bf 2649.20}& 316\\
\smallskip
(15, 22)   & 50 &  787   & -              &     -  &  & -           &  -   &&- & -\\
\smallskip
(22, 36)   & 10 &  228   & 10982            & 9948    &  & {\bf 4.43}        & 0    && { 5.21} &1 \\
\smallskip
(22, 36)   & 15&  388   & -              &      - &  &     {208.23}      & 42&& {\bf 98.21} &19 \\
\smallskip
(22, 36)   & 20 &   498  & -              &     -  &  & -           &-     &&{\bf 763.33}  &394 \\
\smallskip
(22, 36)   & 25 &   608  & -              &      - &  & -           &    - &&{\bf 3381.22}  & 498\\
\smallskip
(22, 36)   & 30 &  718   & -              &      - &  & -           &  -  && {\bf 8382.12} &913 \\
\smallskip
(22, 36)   & 40 &     938& -              &     -  &  & -           &   -  & &- &-\\
\midrule
\end{tabular}
\end{center}
\end{table}

%
\section{Conclusion}\label{sec:conclusion}
This work proposes a couple of mathematical programming based algorithms for the
optimal mapping of virtual machines while taking into account
the bilinear bandwidth constraints and other knapsack constraints regarding CPU
and memory.~The first one is a compact model involving RLT inequalities and some
strong valid inequalities exploiting the problem structure. The second one is a
Lagrange decomposition based B\&B algorithm for solving larger problem instances. We
show both theoretically and numerically that the proposed valid inequalities and bounding procedures can
improve the continuous relaxation bounds significantly. 
We also demonstrate that the proposed B\&B algorithm is numerically encouraging. 

Based on the results presented in this paper, several research directions can be considered. 
First, decomposition strategies exploiting the structure of the physical network should be investigated 
to accelerate the solution procedure of problem instances associated with a large physical network. 
Second, specialized branch-and-cut algorithm incorporating the Lagrange lower bounding procedure 
and some recent findings of the convex and concave estimators in~\cite{Wang2017:convex} can be devised. 
Third, some approximation algorithm might be devised to achieve guaranteed feasible solutions of high quality.
\bibliographystyle{spmpsci} 
\bibliography{phd}
\end{document}